\documentclass[11pt,reqno]{amsart}
\usepackage{enumitem}
\usepackage[margin=2cm,bottom=3cm]{geometry}
\usepackage[dvipsnames]{xcolor}


\let\origleft\left
\let\origright\right
\renewcommand\left{\mathopen{}\mathclose\bgroup\origleft}
\renewcommand\right{\aftergroup\egroup\origright}

\usepackage{caption}
\usepackage{tikz}
\usetikzlibrary{calc,patterns}

\usepackage{thmtools}
\declaretheorem{theorem}
\declaretheorem[name=Corollary]{cortheorem}[sibling=theorem]
\declaretheorem[numberwithin=section]{lemma}
\declaretheorem[numbered=no]{remark}
\declaretheorem{proposition, corollary, scholium}[
  style=plain,
  sibling=lemma
]

\usepackage[colorlinks,allcolors=RoyalBlue]{hyperref}
\usepackage[capitalize,nameinlink]{cleveref}
\crefdefaultlabelformat{#2{\scshape #1}#3}
\creflabelformat{assump}{#2(#1)#3}
\crefname{assump}{Assumption}{Assumptions}
\crefname{figure}{Figure}{Figures}
\crefname{cortheorem}{Corollary}{Corollaries}

\usepackage{chngcntr}
\usepackage{apptools}
\AtAppendix{\counterwithout{proposition}{section}}

\usepackage{amssymb}
\usepackage{dsfont}
\usepackage{esint}
\usepackage{interval}
\usepackage{mathtools}
\usepackage{xparse}
\mathtoolsset{centercolon}

\newcommand\shortcut[3]{%
  \expandafter\xdef\csname #1#2\endcsname{\noexpand#3{#2}}%
}
\foreach \x in {A,...,Z} {%
  \shortcut{b}{\x}{\mathbf}
  \shortcut{c}{\x}{\mathcal}
  \shortcut{d}{\x}{\mathbb}
  \shortcut{f}{\x}{\mathfrak}
  \shortcut{r}{\x}{\mathrm}
}

\let\la\lambda%
\let\ka\kappa%
\let\ga\gamma%
\let\La\Lambda%
\let\eps\varepsilon%
\let\RR\dR%
\let\ZZ\dZ%
\intervalconfig{left open fence=(, right open fence=)}
\newcommand\newIntval[2]{\expandafter\DeclareDocumentCommand\csname #1%
        \endcsname{ o m m }{\IfValueTF{##1}{%
                        \interval[#2,##1]{##2}{##3}}{%
                        \interval[#2]{##2}{##3}}}}
\newIntval{oo}{open}
\newIntval{oc}{open left}
\newIntval{co}{open right}
\newIntval{cc}{}
\newcommand\bs[1]{\boldsymbol{\mathbf{#1}}}
\newcommand\dd{\mathrm d}

\DeclareMathOperator\ee{e}
\DeclareMathOperator\hull{hull}
\DeclareMathOperator\EE{\dE}
\DeclareMathOperator\PP{\dP}
\DeclareMathOperator\II{\mathds1}
\newcommand\vol[1]{\lvert#1\rvert}
\DeclareMathOperator\lb{\rB}
\DeclareMathOperator\cb{\rB}
\DeclareDocumentCommand\sf{ o g }{h^{\IfValueTF{#2}{(#2)}{(d)}}_{\IfValueTF{#1}{#1}{\la}}}
\newcommand\rvf[1][\la]{\rho^{(d)\vphantom{,1}}_{#1}}
\newcommand\Cover{\mathrm{Cover}}
\newcommand\Wd[1][1]{\operatorname{W_{#1}}}


\newcommand\mail[1]{\normalfont\href{mailto:#1}{\texttt{#1}}}

\author[P.~Calka]{Pierre Calka}
\address[PC]{Laboratoire de Mathématiques Raphaël Salem, Université de Rouen
Normandie, Avenue de l'Université, BP 12, Technopôle du Madrillet, F76801 Saint-Étienne-du-Rouvray France}
\email{\mail{pierre.calka@univ-rouen.fr}}%
\title[The support function of the high-dimensional Poisson polytope]{The support function of the high-dimensional\\Poisson polytope}

\author[B.~Dadoun]{Benjamin Dadoun}
\address[BD]{Laboratoire Manceau de Mathématiques, Le Mans Université, Avenue Olivier Messiaen, 72085 Le Mans France}%
\email{\mail{benjamin.dadoun@univ-lemans.fr}}%

\subjclass[2020]{52A23; 52A22; 52B11; 60D05}

\begin{document}
\begin{abstract}
    Let~$K^d_\la$ be the convex hull of the intersection of the homogeneous Poisson point process of intensity~$\la$ in~$\RR^d$, $d\ge 2$, with the Euclidean unit ball~$\dB^d$.
    In this paper, we study the asymptotic behavior as~$d\to\infty$
    of the support function
    \[\sf(u):=\sup_{x\in K^d_\la}\langle u,x\rangle\]
    in an arbitrary direction~$u\in\dS^{d-1}$
    of the Poisson polytope~$K^d_\la$.
    We identify three different regimes (subcritical, critical, and supercritical)
    in terms of the intensity~$\la:=\la(d)$ and derive in each regime
    the precise distributional convergence
    of~$\sf$ after suitable scaling. We especially treat this
    question where the support function is considered over multiple
    directions at once. We finally deduce partial counterparts for the radius-vector function of the polytope.

    \bigskip
    \noindent
    \textsc{Keywords.} Support function; random convex polytopes; phase transition in high dimension; covering of the sphere; special functions; extreme value distribution.
\end{abstract}
\maketitle

\thispagestyle{empty}
\section{Introduction}
The study of high-dimensional polytopes has attracted
recent attention in stochastic
geometry~\cite{Bonnet19,Gusakova21,Kabluchko23,Gusakova23}.
This is due to several factors. First of all, several classical models
of random polytopes can be defined in any dimension
and as such, they provide natural examples
which might confirm or deny several of the famous conjectures
from high-dimensional convex geometry.
For example, the work~\cite{Horrmann15} deals with Poisson
polytopes generated by random hyperplane tessellations
in the context of the hyperplane conjecture which claims that any $d$-dimensional convex body with volume~$1$ should have a section by a hyperplane whose $(d-1)$-dimensional volume is bounded from below by
a constant independent on the dimension~$d$. Similarly, the Hirsch conjecture which asserts that the edge-vertex graph of the boundary of a $d$-dimensional convex polytope with~$n$ facets should have a graph-diameter at most~$n-d$ was recently denied~\cite{Santos12} but since then, bounds in the form of polynomial functions of~$n$ and~$d$ have been investigated, in particular in the context of so-called random spherical polytopes~\cite{Bonnet22}. The second motivation behind the study of high-dimensional polytopes has an information theory background in the context of Shannon's channel coding theory and of the one-bit compression source coding. This involves in particular studying high-dimensional cells from the Poisson hyperplane tessellation for different regimes connecting the intensity with the dimension~\cite{Baccelli19,OReilly20}. Lastly, another context in which high-dimensional random polytopes play a central role is percolation in high dimension. Since Gordon's breakthrough result on the equivalent of the critical probability for the percolation on ${\mathbb Z}^d$ when~$d\to\infty$~\cite{Gordon91}, the high-dimensional setting is expected to possibly help estimating the phase transition. In particular, Poisson-Voronoi percolation along Voronoi cells generated by homogeneous Poisson points has attracted interest~\cite{Balister10,Conijn24}. 

In this paper, we consider the Poisson polytope~$K^d_\la$ obtained as the
convex hull of a Poisson point process with intensity~$\la:=\la(d)$
sampled in the Euclidean, $d$-dimensional unit ball~$\dB^d$.
Together with its binomial variant obtained when replacing
the Poisson point process with a fixed number~$n$
of i.i.d.\ variables uniformly distributed in~$\dB^d$,
this model of random polytope has proved to be one of the most
intensively studied in the literature.
In particular, several non-asymptotic results are known,
including Wendel's calculation~\cite{Wendel62}
of the probability that the origin belongs to the random
polytope, Efron and Buchta's mean-value
identities~\cite{Efron65,Buchta05} and the recent work due
to Kabluchko~\cite{Kabluchko21a,Kabluchko21b}
which provides an explicit formula for the expectation
of the $f$-vector constituted with the number of
$k$-dimensional facets of~$K^d_\la$.
The asymptotic study of~$K^d_\la$, when the dimension is
fixed and the intensity or the number of input points is large,
dates back to the seminal work due to R\'enyi and Sulanke
in dimension two~\cite{Renyi63,Renyi64} and has been then
carried through many subsequent works which have made
explicit limit expectations
for several functionals~\cite{Schneider80,Schutt94,Reitzner03},
variance bounds and limit variances~\cite{Barany10,Calka14}
and functional limit theorems for the support function and
radius-vector function~\cite{Calka13}.

In the continuation of the early work of B\'ar\'any and F\"uredi~\cite{Barany88},
who investigated the phase transition
for the probability of having all sampled points in convex position,
the high-dimensional study of~$K^d_\la$ and variants
has been continued over the last
decades~\cite{Dyer92,
Gatzouras09,
Bonnet19,
Chakraborti21,
Bonnet21},
with more emphasis on the threshold for the emergence of significant volumes.
Notably,
in~\cite{Bonnet21}, Bonnet, Kabluchko and Turchi estimated
the asymptotics for different intensity regimes of the mean volume
of a more general polytope called the $\beta$-polytope
which includes the case of~$K^d_\la$
(save for the fact
that they consider a binomial version of it)%
.
In particular, they exhibit a critical phase when the logarithm
of the number of input points is comparable to~$\frac{d}2\log d$,
i.e., they show that the expected normalized volume
of the polytope vanishes when the number of input points
increases slower than~$d^{d/2}$,
and persists when this number increases faster than~$d^{d/2}$.
At the critical threshold, when the number
grows like~$(d/2x)^{d/2}$, they prove a convergence to~$\ee^{-x}$.

Associated with the (random) polytope~$K^d_\la$ are the random processes~$\sf$ and~$\rvf$,
respectively called the \textit{support function} and the \textit{radius-vector function},
given
for any direction~$u$ in the unit sphere $\dS^{d-1}$ by
\begin{align}
    \sf(u)&:=\sup{\Bigl\{\langle u,x\rangle:x\in K^d_\la\Bigr\}},
    \label{eq:support-function}
    \intertext{and}
    \rvf(u)&:=\sup{\Bigl\{t>0:tu\in K^d_\la\Bigr\}}.
    \label{eq:radius-vector-function}
\end{align}
\begin{figure}[t]
    \centering
    \includegraphics[width=.33\textwidth]{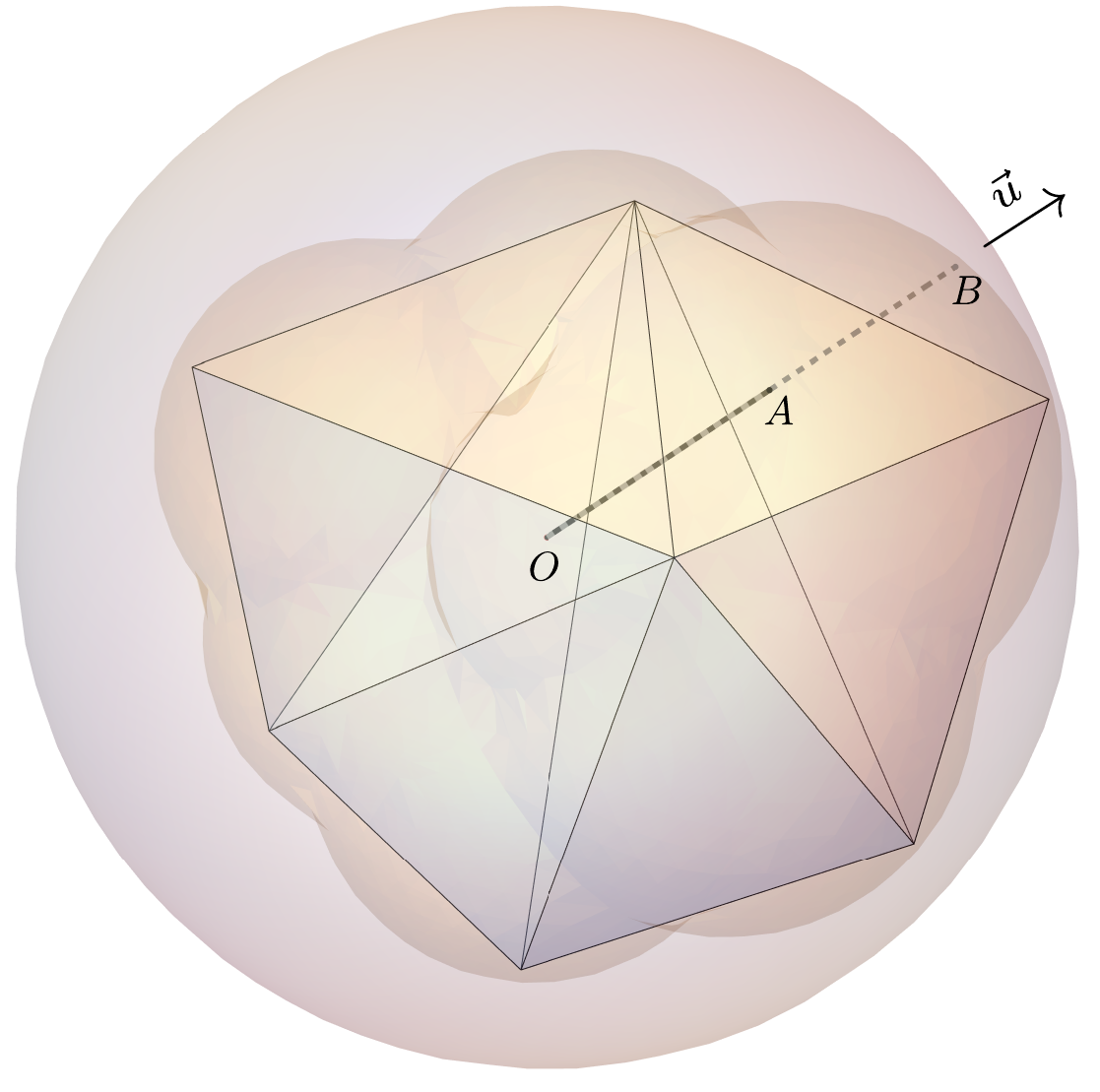}
    \caption{The support function $\sf(u):=OB$
    and the radius-vector function $\rvf(u):=OA$ of
    the polytope~$K^d_\la$ in dimension~$d=3$.
    The Voronoi flower is
    the union of the
    closed
    balls $(\frac x2+\frac{\lvert x\rvert}2
    \dB^d)$
    where~$x$ runs over the vertices of~$K^d_\la$.}%
    \label{fig:polytope}
\end{figure}%
The support function of~$K^d_\la$ corresponds
to the radius-vector function of its Voronoi flower%
, see \cref{fig:polytope}.
Even though~$\sf$ and~$\rvf$
are one-dimensional statistics,
they are known to fully characterize the convex body~\cite[Theorem~1.7.1]{Schneider13},
so understanding~$\sf$ or~$\rvf$ as~$d\to\infty$
already sheds significant light
on the asymptotic geometry of~$K^d_\la$.
To the best of our knowledge, the study of these functionals in high dimension has so far been largely ignored.
It appears however that the analysis of the radius-vector function
is more delicate and we mostly focus on the support
function in this work.

The distribution
of the random variables~$\sf(u)$
and~$\rvf(u)$ does not depend on~$u$
by rotational invariance; we let $\sf:=\sf(u)$ and $\rvf:=\rvf(u)$, and consider that~$u$
is either fixed in~$\dS^{d-1}$ or chosen uniformly at random in~$\dS^{d-1}$ (independently of the
Poisson process).
Given the following technical assumption on~$\la$,
\begin{equation}
    \liminf_{d\to\infty}\frac{\la\ka_d}d>2,\tag{H}
    \label[assump]{eq:assumption-origin}
\end{equation}
where~$\ka_d:=\lvert\dB^d\rvert$ is the $d$-dimensional volume of the unit ball,
the random variables~$\sf$ and~$\rvf$ will take
values in~$\cc{0}{1}$ with high probability as~$d\to\infty$
(see \cref{lem:origin}).
This hypothesis means that~$\la\ka_d$,
i.e., the mean number of points in~$\dB^d$ of the Poisson process,
is asymptotically greater than twice the dimension~$d$.

Our first result identifies the \emph{subcritical}, \emph{critical}, and
\emph{supercritical regimes}
where~$\sf$ tends either to~$0$, to a value in~$\oo{0}{1}$, or to~$1$ as~$d\to\infty$,
depending on whether $\log{\la\ka_d}$ grows slower than, comparably to,
or faster than the dimension~$d$.
\begin{theorem}[Asymptotic regimes of~$\sf$]\label{thm:regimes}
        Under \cref{eq:assumption-origin}, suppose that
    \[x:=\lim_{d\to\infty}\frac1d\log{\la\ka_d}\]
    exists in~$\cc{0}{\infty}$.
    Then the following holds in probability as~$d\to\infty$:
    \[\left\{\begin{aligned}
        \sf&\sim \sqrt{\frac2d\log{\la\ka_d}},&&\text{if~$x=0$},\\[.4em]
        \sf&\to\sqrt{1-\ee^{-2x}},&&\text{if $x\in\oo{0}{\infty}$},\\[.4em]
        1-\sf&\sim\frac12{(\la\ka_d)}^{-\frac2{d+1}},&&\text{if~$x=\infty$}.
    \end{aligned}\right.\]
\end{theorem}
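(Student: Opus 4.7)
\medskip
\noindent\emph{Proof plan.}
Since $K^d_\la$ is the convex hull of the Poisson points falling in~$\dB^d$, for any fixed direction $u\in\dS^{d-1}$ the support function $\sf(u)$ is simply the maximum of $\langle u,x_i\rangle$ over those points. By the Poisson void-probability formula,
\[\PP(\sf\le s)=\exp\bigl(-\la\vol{\cC_s}\bigr),\qquad 0<s<1,\]
where $\cC_s:=\{x\in\dB^d:\langle u,x\rangle>s\}$ is the spherical cap of height~$1-s$, with volume $\vol{\cC_s}=\ka_{d-1}\int_s^1(1-t^2)^{(d-1)/2}\,\dd t$. In each regime the plan is to locate deterministic scales $s_-(d)<s_+(d)$ bracketing the claimed value of~$\sf$ such that $\la\vol{\cC_{s_-}}\to\infty$ and $\la\vol{\cC_{s_+}}\to 0$; this gives $\PP(s_-<\sf\le s_+)\to 1$ and yields the convergence in probability.

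The key ingredient is a Laplace-type asymptotic for the cap integral. Applying the endpoint Laplace method to $\int_s^1\ee^{\,(d-1)\log(1-t^2)/2}\,\dd t$ gives, subject to standard regularity conditions on~$s$ that are checked in each regime,
\[\vol{\cC_s}\sim\frac{\ka_{d-1}\,(1-s^2)^{(d+1)/2}}{(d-1)\,s},\]
so that combined with the Stirling asymptotic $\ka_{d-1}/\ka_d\sim\sqrt{d/(2\pi)}$,
\[\la\vol{\cC_s}\sim\frac{\la\ka_d\,(1-s^2)^{(d+1)/2}}{s\,\sqrt{2\pi d}}.\]
In the \emph{critical} regime, perturbing $s$ around $\sqrt{1-\ee^{-2x}}$ by a fixed~$\eps>0$ shifts $\tfrac{d+1}{2}\log(1-s^2)$ by an amount linear in~$d$, so the right-hand side blows up or vanishes exponentially, giving $\sf\to\sqrt{1-\ee^{-2x}}$. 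In the \emph{subcritical} regime, $s\to 0$ with $ds^2\to\infty$, so $(1-s^2)^{(d+1)/2}\approx\ee^{-ds^2/2}$; substituting $s=(1\pm\eps)\sqrt{(2/d)\log(\la\ka_d)}$ produces a ratio of order $(\la\ka_d)^{\mp 2\eps-\eps^2}/\sqrt{\log(\la\ka_d)}$, which diverges for the minus sign and vanishes for the plus. In the \emph{supercritical} regime where $s\to 1$, direct expansion via $1-s^2=2(1-s)(1+O(1-s))$ recovers the same formula; writing $1-s=c\,(\la\ka_d)^{-2/(d+1)}$ produces $\la\vol{\cC_s}\sim(2c)^{(d+1)/2}/\sqrt{2\pi d}$, which diverges exponentially for $c>\tfrac12$ and vanishes for $c<\tfrac12$, placing $1-\sf$ on the scale $\tfrac12(\la\ka_d)^{-2/(d+1)}$.

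I expect the subcritical case to require the most care, as the exponential separation between $s_-$ and $s_+$ is weakest there. One must exploit~\cref{eq:assumption-origin} essentially to argue that the perturbation factor $(\la\ka_d)^{\pm 2\eps}$ dominates the slowly-decaying prefactor $1/\sqrt{\log(\la\ka_d)}$: this follows from $\la\ka_d\ge(2+o(1))d\to\infty$, so that any positive power of~$\la\ka_d$ beats any power of $\log(\la\ka_d)$. One must also verify that the remainder in $\log(1-s^2)=-s^2+O(s^4)$ contributes only $O(ds^4)=o(ds^2)$ uniformly in the relevant range, which is immediate because $s\to 0$ in this regime.
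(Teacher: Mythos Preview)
Your approach is correct and, for this particular theorem, more direct than the paper's. The paper establishes \cref{thm:regimes} as a corollary of the stronger distributional result (\cref{thm:main-one-directional}): it first proves the Gumbel limit for the suitably recentered quantity, then reads off the first-order asymptotics by writing the Gumbel convergence as
\[
{(\la\ka_d)}^{\frac2{d+1}}\Bigl(1-{\bigl(\sf\bigr)}^2\Bigr)=1+\frac1{\log d}\cdot O_\dP(1),
\]
and unpacking this in each regime. Your argument instead brackets~$\sf$ directly by deterministic levels~$s_\pm$ for which $\la\lvert\cC_{s_\pm}\rvert\to\infty$ or~$0$, using only the cap-volume asymptotic (the paper's \cref{lem:estimates-incomplete-beta-function} and the resulting formula~\eqref{eq:exponent-cdf-support-function-2}). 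In fact the paper already does exactly this for the critical case in \cref{lem:limit-support-function}; you are simply extending that bracketing to~$s$ depending on~$d$. The gain is that you avoid the implicit equation~\eqref{eq:implicit-definition-r} and the whole Gumbel analysis; the cost is that you do not obtain the second-order fluctuations.

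One point deserves more care in the supercritical regime. Your claimed relation $\la\vol{\cC_s}\sim(2c)^{(d+1)/2}/\sqrt{2\pi d}$ with $1-s=c\,(\la\ka_d)^{-2/(d+1)}$ is not quite a genuine~$\sim$: the factor $(1+s)^{(d+1)/2}=2^{(d+1)/2}\bigl(1-\tfrac{c}{2}(\la\ka_d)^{-2/(d+1)}\bigr)^{(d+1)/2}$ contributes an extra $\exp\bigl(O(d\,(\la\ka_d)^{-2/(d+1)})\bigr)$, and $d\,(\la\ka_d)^{-2/(d+1)}$ need not tend to~$0$ when merely $\log(\la\ka_d)\gg d$ (take e.g.\ $\log(\la\ka_d)=d\log\log d$). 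However, this extra factor is always~$\ee^{o(d)}$, which is negligible against the genuinely exponential main term $(2c)^{(d+1)/2}$ when $c\ne\tfrac12$; so your divergence/vanishing dichotomy survives, and hence so does the bracketing. Just replace the~$\sim$ by the weaker statement $\log\bigl(\la\vol{\cC_s}\bigr)=\tfrac{d+1}{2}\log(2c)+o(d)$ when you write this up.
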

\noindent%
Note that \cref{eq:assumption-origin}
is automatically fulfilled in the critical and supercritical regime ($x\in\oc{0}{\infty}$).

In addition, we determine the distributional fluctuations
of~$\sf$ around its limiting value.
\begin{theorem}[Convergence in distribution of the renormalized support function]\label{thm:main-one-directional}
  Under the assumption of \cref{thm:regimes},
  the random variable
  \[d\left(\log{\frac1{\sqrt{1-{\bigl(\sf\bigr)}^2}}}-\frac1{d+1}\log{\la\ka_d}\right)
  +\log{\sqrt{\mathfrak m(d)}}
  \]
  converges towards the standard Gumbel distribution
  as~$d\to\infty$, where
  \[\mathfrak m(d):=\begin{cases}
      4\pi\log{\la\ka_d},&\text{in the subcritical regime $\log{\la\ka_d}\ll d$},\\[.4em]
      2\pi d(1-\ee^{-2x}),&\text{in the critical regime $\log{\la\ka_d}\sim dx$ with $x\in\oo{0}{\infty}$},\\[.4em]
      2\pi d\left(1-{(\la\ka_d)}^{-\frac2{d+1}}\right),&\text{in the supercritical regime $\log{\la\ka_d}\gg d$}.
  \end{cases}
  \]
\end{theorem}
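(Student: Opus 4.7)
The plan is to translate the Gumbel limit for $\sf$ into a sharp asymptotic for the volume of a spherical cap. By rotational invariance we fix $u = e_1$ and introduce the cap $C_t := \{x \in \dB^d : \langle u, x\rangle > t\}$ for $t \in \cc{0}{1}$. Under \cref{eq:assumption-origin}, \cref{lem:origin} implies that with probability tending to~$1$ the origin lies in $K^d_\la$, on which event $\{\sf \leq t\}$ coincides with the event that the Poisson process avoids~$C_t$; consequently
\[ \PP(\sf \leq t) = \ee^{-\la|C_t|} + o(1). \]

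For $y \in \RR$, define the deterministic level $t_d(y) \in \cc{0}{1}$ by
\[ (1-t_d(y)^2)^{(d+1)/2} = (\la\ka_d)^{-1}\,\mathfrak m(d)^{(d+1)/(2d)}\,\ee^{-(d+1)y/d}, \]
so that the event $\{d[\log(1/\sqrt{1-\sf^2}) - (d+1)^{-1}\log(\la\ka_d)] + \log\sqrt{\mathfrak m(d)} \leq y\}$ is exactly $\{\sf \leq t_d(y)\}$. Inspection of the explicit formulas for $\mathfrak m(d)$ together with \cref{thm:regimes} yields $\log\mathfrak m(d) = o(d)$ (so that $\mathfrak m(d)^{(d+1)/(2d)} \sim \sqrt{\mathfrak m(d)}$), shows that $t_d(y)$ has the same leading asymptotics as $\sf$, and in particular guarantees $d\,t_d(y)^2 \to \infty$ in every regime.

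The analytic input is then the uniform cap asymptotic
\[ |C_t| = \frac{\ka_{d-1}(1-t^2)^{(d+1)/2}}{(d+1)\,t}\,\Bigl(1+O\bigl(\tfrac1{dt^2}\bigr)\Bigr), \]
obtained by integration by parts in $|C_t| = \ka_{d-1}\int_t^1 (1-s^2)^{(d-1)/2}\,\dd s$ and valid whenever $dt^2 \to \infty$. Substituting $t = t_d(y)$ and using the Stirling asymptotic $\ka_{d-1}/\ka_d \sim \sqrt{d/(2\pi)}$ gives
\[ \la|C_{t_d(y)}| = \frac{1}{t_d(y)}\sqrt{\frac{\mathfrak m(d)}{2\pi d}}\,\ee^{-y}\,\bigl(1+o(1)\bigr). \]
The forms of $\mathfrak m(d)$ are precisely calibrated so that $t_d(y)^{-1}\sqrt{\mathfrak m(d)/(2\pi d)} \to 1$ in every regime: in the subcritical case $t_d(y)^2 \sim (2/d)\log(\la\ka_d)$; in the critical case $t_d(y)^2 \to 1-\ee^{-2x}$; in the supercritical case $t_d(y)^2 \sim 1 - (\la\ka_d)^{-2/(d+1)}$, each matching the corresponding $\mathfrak m(d)/(2\pi d)$. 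Hence $\la|C_{t_d(y)}| \to \ee^{-y}$ and $\PP(\sf \leq t_d(y)) \to \exp(-\ee^{-y})$, the standard Gumbel CDF.

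The main difficulty will be controlling the cap-volume asymptotic uniformly across the three very different behaviours of $t_d(y)$: tending to $0$ subcritically, to a constant in $\oo{0}{1}$ critically, and to $1$ supercritically. The subcritical regime is the most delicate, since the Gaussian-type approximation $(1-s^2)^{(d-1)/2} \approx \ee^{-(d-1)s^2/2}$ must be integrated over a tail starting at $t_d(y)\to 0$, and the usual sharp tail estimate relies on $d\,t_d(y)^2\to\infty$ (which is precisely what \cref{thm:regimes} gives in this regime). One must also track the subleading factors $\mathfrak m(d)^{(d+1)/(2d)}/\sqrt{\mathfrak m(d)}$ and $\ee^{-(d+1)y/d}/\ee^{-y}$ and the Stirling remainder carefully so that the limit is exactly $\ee^{-y}$.
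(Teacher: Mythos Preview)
Your approach is essentially the paper's: both start from the exact identity $\PP(\sf\le r)=\exp(-\lambda\lvert C_r\rvert)$, expand the cap volume via the same integration-by-parts (equivalently, incomplete-beta) asymptotic valid when $dr^2\to\infty$, and locate the threshold at which the exponent tends to $\ee^{-y}$. The only organisational difference is that the paper defines its threshold $r=r(d;\tau)$ \emph{implicitly} through the full equation $\log(\lambda\kappa_d)+\tfrac{d+1}{2}\log(1-r^2)-\log(r\sqrt{2\pi d})=-\tau$ (absorbing the factor $1/r$ into the definition) and then extracts the regime-dependent asymptotics of $r$ to recognise $\mathfrak m(d)$ a~posteriori, whereas you read $t_d(y)$ off the statement and verify the match afterwards.

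One point to fix: you repeatedly invoke \cref{thm:regimes} to obtain the leading behaviour of $t_d(y)$ and in particular the crucial condition $d\,t_d(y)^2\to\infty$. But in the paper \cref{thm:regimes} is \emph{deduced from} the present theorem, so as written your argument is circular. The repair is immediate and purely deterministic: from your defining equation one has
\[
-\log\bigl(1-t_d(y)^2\bigr)=\frac{2}{d+1}\log(\lambda\kappa_d)-\frac{1}{d}\log\mathfrak m(d)+\frac{2y}{d},
\]
and since $\log\mathfrak m(d)=O(\log d)$ in every regime (by direct inspection of the three formulas, with no reference to $\sf$), the stated first-order asymptotics of $t_d(y)$ and the divergence $d\,t_d(y)^2\to\infty$ follow at once from \cref{eq:assumption-origin} together with the regime hypotheses alone. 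A smaller remark: the $+o(1)$ in your first display is superfluous, since for $t\ge0$ the event $\{\sf(u)\le t\}$ is \emph{exactly} the event that no Poisson point falls in $C_t$ (the supremum of a linear form over a polytope is attained at a vertex), irrespective of whether $0\in K^d_\lambda$.
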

\begin{remark}
In the critical regime where $\log{\la\ka_d}=dx+y+o(1)$ with $x\in\oo{0}{\infty}$ and~$y\in\RR$, \cref{thm:main-one-directional} reads
  \begin{align*}
      d\left(\log{\frac1{\sqrt{1-{\bigl(\sf\bigr)}^2}}}-x\right)
      +\log{\sqrt{2\pi d(1-\ee^{-2x})}}&\:\xRightarrow[d\to\infty]{}\:G+y-x,
      \intertext{that is,}
      \sqrt{2\pi d}{\left(\frac{\ee^{-2x}}{1-{\bigl(\sf\bigr)}^2}\right)}^{\!-\frac d2}
      &\:\xRightarrow[d\to\infty]{}\:\frac{\ee^{G+y-x}}{\sqrt{1-\ee^{-2x}}},
  \end{align*}
  for some standard Gumbel variable~$G$.
\end{remark}

We now state analogous versions of the previous theorems when
the support function is considered over several directions at once.
Namely,
for a fixed integer~$m\ge2$, we define
\[\sf{d,m}\quad:=\inf_{u\in\dS^{d-1}\cap\RR^m}\sf(u).\]
Again, by rotational invariance,
the distribution of this infimum does not depend on the direction of
the linear $m$-dimensional section of~$\dS^{d-1}$.
In particular, we expect $\sf{d,m}$ to behave like~$\sf$, i.e.,
satisfy the exact same conclusions as \cref{thm:regimes}
with the same threshold.
This is confirmed by \cref{thm:main} below
which provides a Gumbel limit distribution
when~$\log \la\kappa_d$ belongs to the asymptotic range
$\bigl(\log d,d^{\frac32}\bigr)$.

\begin{theorem}[Distributional limit]\label{thm:main}
    Let~$m\ge2$ be a fixed integer, and let $\la:=\la(d)>0$
    satisfy
    either one of the three assumptions~\eqref{hyp:subcritical},~\eqref{hyp:critical}, or~\eqref{hyp:supercritical}
    given in \cref{lem:condition_janson}.
    Then there exist two explicit sequences~$\mathfrak a(d;m)$
    and~$\mathfrak b(d;m)$ (given at~\eqref{eq:a-multidirectional} and~\eqref{eq:b-multidirectional}) such that
    \[
        \mathfrak a(d;m)
        -\mathfrak b(d;m)\log{\frac1{\sqrt{1-{\bigl(\sf{d,m}\bigr)}^2}}}
    \]
    converges in law as~$d\to\infty$ towards the standard Gumbel
    distribution.
\end{theorem}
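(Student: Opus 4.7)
The plan is to reformulate $\sf{d,m}\le t$ as a ball-inclusion problem for a projected Poisson polytope, compute the expected number of its close facets by Slivnyak--Mecke, and then turn this first-moment statement into the desired probability bound via a Janson-type inequality.

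For any $u\in\dS^{d-1}$, the event $\sf(u)\le t$ is exactly the event that no Poisson point lies in the spherical cap $C(u,t):=\dB^d\cap\{\langle u,\cdot\rangle>t\}$, hence $\PP(\sf(u)\le t)=\ee^{-\la V_d(t)}$ with $V_d(t):=\int_t^1 \ka_{d-1}(1-r^2)^{(d-1)/2}\dd r$. Since any $u\in\RR^m$ satisfies $\langle u,x\rangle=\langle u,\pi_m(x)\rangle$ where~$\pi_m$ is the orthogonal projection onto~$\RR^m$, a Hahn--Banach separation argument identifies $\{\sf{d,m}\ge t\}$ with $\{t\dB^m\subseteq\hull(\pi_m(\cP))\}$. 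The mapping theorem further identifies $\pi_m(\cP)$ as a Poisson process on~$\dB^m$ with intensity $\la\ka_{d-m}(1-|y|^2)^{(d-m)/2}\dd y$, so the problem reduces to controlling the in-radius of a random polytope in~$\RR^m$, governed by the number of its facets close to the origin.

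Using Slivnyak--Mecke, I would compute the expected number $\mu(t)$ of facets of $\hull(\pi_m(\cP))$ at origin-distance at most~$t$. Parameterizing such a facet by its unit outer normal $v\in\dS^{m-1}$ and its origin-distance $s\in\oc{0}{t}$, $\mu(t)$ reduces to an explicit integral involving the~$m$ in-plane projected-intensity densities at the facet, the emptiness factor $\ee^{-\la V_d(s)}$, and a geometric Jacobian from the facet parameterization. The Laplace-type asymptotics near $s=1$ already developed for~\cref{thm:main-one-directional} should then yield a leading-order factorization $\mu(t)\asymp \bigl(d(1-t^2)\bigr)^{(m-1)/2}\ee^{-\la V_d(t)}$; solving $\mu(t_d)=\ee^{-y}$ for~$t_d$ extracts the sequences~$\mathfrak a(d;m)$ and~$\mathfrak b(d;m)$ in the desired Gumbel form.

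To pass from this first-moment calculation to the convergence $\PP(\sf{d,m}>t_d)\to\ee^{-\mu_\infty}$, I would fix a sufficiently fine $\eps$-net $\{u_i\}\subset\dS^{m-1}$, use a modulus-of-continuity estimate on~$\sf$ to replace $\sf{d,m}$ by $\min_i\sf(u_i)$ up to an $o(1)$ error, and apply Janson's inequality to the events $B_i:=\{\cP\cap C(u_i,t_d)=\emptyset\}$:
\[
  \ee^{-\sum_i p_i-\tfrac12\Delta}\:\le\:\PP\Bigl(\bigcap_i B_i^c\Bigr)\:\le\:\ee^{-\sum_i p_i+\tfrac12\Delta}.
\]
The main obstacle will be the sharp control of the pair-covariance term $\Delta=\sum_{i\ne j}\PP(B_i\cap B_j)$, whose value depends through $\lvert C(u_i,t_d)\cup C(u_j,t_d)\rvert$ on the angle between~$u_i$ and~$u_j$. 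Obtaining $\Delta=o(1)$ requires expanding this union volume to sub-leading order and integrating it against the angular Jacobian on $\dS^{m-1}$; this is precisely the content of the three hypotheses~\eqref{hyp:subcritical},~\eqref{hyp:critical},~\eqref{hyp:supercritical} of \cref{lem:condition_janson}. I expect the subcritical regime $\log\la\ka_d=o(d)$ to be the most delicate case, since there the caps are largest relative to the ball and the Gaussian-type tail from pairwise intersections is closest to the integrability threshold.
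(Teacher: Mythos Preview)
Your reduction to the projected polytope in~$\RR^m$ matches the paper's \cref{lem:reduction-to-covering-problem}, but from that point on the two arguments diverge substantially, and your plan has a genuine gap at the second-moment step.

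The paper does \emph{not} discretize and apply the Janson correlation inequality. Instead it observes that the event $\{\sf{d,m}\ge r\}$ is exactly a random covering of the fixed manifold~$\dS^{m-1}$ by geodesic balls $B_{\dS^{m-1}}(x_i,a\rho_i)$, and then invokes Janson's \emph{random covering theorem}~\cite[Lemma~8.1]{Janson86}, which directly gives the Gumbel limit for $\PP(\Cover(\La,R,a;M))$ under moment and scaling conditions on the radii. The main technical work is that the radii distribution here depends on~$a$ (equivalently on~$d$), so the paper proves an extension (\cref{pro:extension-janson}) allowing $R=R_a$, via a coupling argument sandwiching $R_a$ between $(1\pm\eta_a)R$ for a fixed Rayleigh variable~$R$; this is where \cref{lem:covering-radius-convergence} and the hypotheses of \cref{lem:condition_janson} enter.

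Your proposed route---first-moment facet count plus a discretized Janson correlation inequality---does not close. First, the two-sided bound you wrote for $\PP(\bigcap_i B_i^c)$ is not the Janson inequality: the $B_i=\{\cP\cap C(u_i,t_d)=\emptyset\}$ are \emph{decreasing} events in the Poisson process, so Janson's upper bound applies to $\PP(\bigcap_i B_i)$, not to $\PP(\bigcap_i B_i^c)$; a matching lower bound via FKG goes the wrong way. Second, even granting a usable inequality, showing $\Delta=o(1)$ is precisely the hard part of any coverage problem: nearby net directions $u_i,u_j$ have $\PP(B_i\cap B_j)\approx\PP(B_i)$ rather than $\PP(B_i)\PP(B_j)$, and since the support function is only $1$-Lipschitz while the fluctuation scale of~$\sf{d,m}$ is $O(1/d)$ (from $\mathfrak b(d;m)\asymp d\log d$), the net must be extremely fine, pushing $\Delta$ up. Untangling this angular integral with sub-leading precision is exactly what Janson's covering theorem packages, so you would essentially be reproving it. Third, the Slivnyak--Mecke facet count and the empty-cap discretization are two different first-moment objects, and you have not explained how one feeds into the other; a Poisson limit for the number of close facets would require factorial-moment or Stein--Chen control, not merely the expectation.
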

The proof of \cref{thm:main}
relies essentially on the \textit{ad hoc} application of a remarkable result due to S.~Janson on random
coverings of a set~\cite{Janson86}.
As a corollary, we extend \cref{thm:regimes} to~$\sf{d,m}$.
\begin{cortheorem}[Asymptotic regimes of~$\sf{d,m}$]\label{cor:regimes-multidirectional}
  Under the assumptions of \cref{thm:main}, let
\[x:=\lim_{d\to\infty}\frac1d\log{\la\ka_d}\in\cc{0}{\infty}.\]
Then the following holds in probability as~$d\to\infty$:
\[\left\{\begin{aligned}
  \sf{d,m}&\sim \sqrt{\frac2d\log{\la\ka_d}},&&\text{if~$x=0$},\\[.4em]
  \sf{d,m}&\to\sqrt{1-\ee^{-2x}},&&\text{if $x\in\oo{0}{\infty}$},\\[.4em]
  1-\sf{d,m}&\sim\frac12{(\la\ka_d)}^{-\frac2{d+1}},&&\text{if~$x=\infty$}.
\end{aligned}\right.\]
\end{cortheorem}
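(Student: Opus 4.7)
The plan is to read off the leading-order behaviour of $\sf{d,m}$ directly from the Gumbel convergence of \cref{thm:main}. Since the standard Gumbel distribution is tight and (under each of the three hypotheses) the scaling $\mathfrak b(d;m)$ diverges, the affine convergence in distribution implies the convergence in probability
\[
    \log\frac1{\sqrt{1-{\bigl(\sf{d,m}\bigr)}^2}}
    \;=\; \frac{\mathfrak a(d;m)}{\mathfrak b(d;m)} + O_\dP\!\left(\frac1{\mathfrak b(d;m)}\right).
\]
The problem thus reduces to computing the leading order of the deterministic ratio $\mathfrak a(d;m)/\mathfrak b(d;m)$ in each regime, followed by an inversion of the monotone map $s\mapsto\log\frac1{\sqrt{1-s^2}}$.

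For the first step, I would compare the explicit expressions~\eqref{eq:a-multidirectional}--\eqref{eq:b-multidirectional} with the case $m=1$ of \cref{thm:main-one-directional}, where $\mathfrak b(d)=d$ and the centering agrees with $\frac1{d+1}\log\la\ka_d$ at leading order. The expectation is that
\[
    \frac{\mathfrak a(d;m)}{\mathfrak b(d;m)} \;=\; \frac1{d+1}\log{\la\ka_d} \;+\; o\!\left(\tfrac1d\log{\la\ka_d}\right) + o(1)
\]
in all three regimes, any $m$-dependent factors (likely involving Gamma ratios and a $\log d$ term) being absorbed into the correction. Under this expansion the ratio tends to $0$ in the subcritical regime, to $x$ in the critical one, and diverges like $\frac1{d+1}\log\la\ka_d$ in the supercritical one.

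The second step is elementary. Subcritically, $\log\frac1{\sqrt{1-s^2}}\sim s^2/2$ as $s\to0$ gives $\sf{d,m}\sim\sqrt{2d^{-1}\log{\la\ka_d}}$; critically, exponentiating yields $\sf{d,m}\to\sqrt{1-\ee^{-2x}}$; supercritically, the expansion $\log\frac1{\sqrt{1-s^2}}\sim-\tfrac12\log\bigl(2(1-s)\bigr)$ as $s\to1$ produces $1-\sf{d,m}\sim\tfrac12{(\la\ka_d)}^{-2/(d+1)}$. The main obstacle is therefore the first step: carefully unwinding $\mathfrak a(d;m)/\mathfrak b(d;m)$ from the explicit formulas and checking that the $m$-dependent corrections remain subdominant in every regime. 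Once that bookkeeping is done, the Gumbel tightness argument and the elementary inversions above deliver the three asymptotic relations of the corollary.
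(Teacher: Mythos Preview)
Your approach is the paper's: from \cref{thm:main} write
\[
\mathfrak a(d;m)-\mathfrak b(d;m)\log\frac1{\sqrt{1-\bigl(\sf{d,m}\bigr)^2}}=O_\dP(1),
\]
divide by~$\mathfrak b(d;m)$, compute the leading order of $\mathfrak a/\mathfrak b$ from~\eqref{eq:a-multidirectional}--\eqref{eq:b-multidirectional}, and invert. The paper phrases the middle step multiplicatively as $\ee^{2\mathfrak a/\mathfrak b}\sim(\la\ka_d)^{2/d}$ (then $\sim(\la\ka_d)^{2/(d+1)}$ since $\log\la\ka_d\ll d^2$ in all three regimes) and refers back to the proof of \cref{thm:regimes} for the inversions.

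One point to sharpen: the error ``$o(\tfrac1d\log\la\ka_d)+o(1)$'' you display is too coarse in both extreme regimes. Supercritically the sum collapses to $o(\tfrac1d\log\la\ka_d)$, which after inversion only yields $\log(1-\sf{d,m})\sim-\tfrac2{d+1}\log\la\ka_d$ and loses the constant~$\tfrac12$; you need the \emph{additive} error on $\log\frac1{\sqrt{1-s^2}}$ to be $o(1)$. Subcritically the sum collapses to $o(1)$, which only yields $\sf{d,m}\to0$ and loses the rate; there you need $o(\tfrac1d\log\la\ka_d)$. The unified statement that works (and is what the bookkeeping actually gives) is
\[
\frac{\mathfrak a(d;m)}{\mathfrak b(d;m)}-\frac1d\log\la\ka_d
=O\!\left(\frac{\mathfrak s(d)}d\right),
\]
which is $o(1)$ in every regime and, since $\mathfrak s(d)\sim\tfrac12\log\log\la\ka_d$ under~\eqref{hyp:subcritical}, is also $o(\tfrac1d\log\la\ka_d)$ there. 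With this correction your three inversions go through exactly as you wrote them.
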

Though the idea is quite natural, the study of the asymptotics of the support function as a tool for describing the geometry of high-dimensional random polytopes seems original. We provide here an extensive analysis of its minimum on a subspace of fixed dimension with the help of essentially two ingredients: on one hand, a careful application of known estimates of the incomplete beta function and on the other hand, a reformulation of the problem in terms of the probability to cover the sphere with random spherical caps. The latter requires in particular a slight extension of a precise estimate due to Janson~\cite{Janson86} of the covering probability with i.i.d.\ rescaled random spherical caps when the common distribution of the geodesic radii of the caps is allowed to depend on the scaling parameter. This result is interesting on its own and is deduced by a coupling technique which avoids rewriting the details of Janson's original proof in our context. Besides, our work is a possible first step towards a more systematic study of the whole support function process along the $d$-dimensional sphere, see the discussion below. The radius-vector function process is certainly worthy of investigation as well and we state a minor first result in this direction deduced from Wendel's probability calculation~\cite{Wendel62}.

\cref{thm:regimes,cor:regimes-multidirectional} are clearly reminiscent of~\cite[Theorem~3.1]{Bonnet21}.
If~$\la$ is taken so that
$\log{\la\ka_d}$
belongs to the asymptotic range~$[d,d\log d)$, then the expected volume ratio
$\EE{\lvert{K^d_\la}\rvert}/\ka_d$ vanishes as observed in~\cite{Bonnet21},
while according to \cref{thm:regimes,cor:regimes-multidirectional}, $K^d_\la$ still has long `arms'
in any finite number of independent directions.
This confirms the well-known picture of a high-dimensional
convex body which was popularized by Vitali Milman
and which looks like a `star-shaped body with a lot of
points very far from the origin and lot of points very
close to the origin'~\cite[Section~2]{Guedon14}.
Studying the minimum of the support function over a section
of~$K^d_\la$ may provide a way of quantifying the size of
the `holes', i.e., estimating the critical dimension under
which the support function is close to one in every direction
of the section of~$K^d_\la$ and above which we expect to see
directions almost unoccupied by the section of~$K^d_\la$.
Unsurprisingly, \cref{cor:regimes-multidirectional} suggests that as
soon as we reach the threshold for the one-dimensional
section given in \cref{thm:regimes},
we expect every section with fixed dimension~$m$ to
look like the~$m$-dimensional unit ball.
The rest of the study should then lead us to consider~$m$
tend to infinity with~$d$ in the supercritical
case of \cref{thm:regimes} in order to decide
when exactly the function~$\sf{d,m}$ switches
from being almost equal to~$1$ to being almost equal to~$0$.
This requires a serious revision of the covering methods
used in the proof of \cref{thm:main} that we leave
for further work.

We expect that our results can be depoissonized and that, as long as the asymptotics of the measure of a spherical cap is accessible, our methods should extend to other models with rotationally-invariant points, such as the $\beta$-polytope studied in~\cite{Bonnet19} which covers our model when~$\beta=0$ and the case of the convex hull of points uniformly distributed on the unit sphere when~$\beta=-1$. In the latter setting, Bonnet and O'Reilly~\cite{Bonnet22b} have recently studied the typical height of a facet, which, incidentally, is proved to have the same first-order asymptotics as the support function in a fixed direction, as can be seen from a comparison between Theorem~4 therein and our \cref{thm:regimes}.

The paper is structured as follows. We start in \cref{sec:preliminaries}
with some asymptotic preliminaries, notably for the
incomplete beta function to which the tail probabilities of~$\sf$ and~$\sf{d,m}$ are
related. As a warm-up, in \cref{sec:one-directional-support-function}, we establish
\cref{thm:regimes,thm:main-one-directional} by elementary means.
\cref{sec:multidirectional-support-function} is devoted to the use of Janson's covering
techniques for proving \cref{thm:main,cor:regimes-multidirectional}.
Finally, in \cref{sec:radius-vector-function}, we transfer some of our results
to the radius-vector function~$\rvf$ of~$K^d_\la$.

\paragraph*{\bfseries Notation.}
    Unless otherwise specified, all asymptotic estimates are w.r.t.\ $d\to\infty$.
    The relation~$g\gg f$ (or~$f\ll g$, or~$f=o(g)$)
    for nonnegative~$f:=f(d)$ and~$g:=g(d)$
    means that $f(d)\le\eps g(d)$
    holds for all~$d$ sufficiently large and any~$\eps>0$, while~$f=O(g)$
    or~$f\lesssim g$
    indicate that $f(d)\le Cg(d)$ holds for all~$d\ge1$ and some constant~$C>0$.
    We also write~$f\sim g$ if $\lvert f-g\rvert\ll g$,
    and~$f\asymp g$ if both~$f\lesssim g$ and~$g\lesssim f$ hold.

\section{Preliminaries}\label{sec:preliminaries}
This section aims at providing an explicit formula for the distribution function
of%
~$\sf$
in terms of the incomplete beta function.
It also paves the way for the
asymptotic study
of the tail probability of~$\sf$
and~$\sf{d,m}$,
which is the focus of \cref{sec:one-directional-support-function,sec:multidirectional-support-function}.

Let $\cP^d_\la,\,d\ge2,$ be Poisson point processes
(embedded in a common abstract probability space~$(\Omega,\cA,\PP)$)
with intensities $\la:=\la(d)>0$ in~$\RR^d$.
The polytope~$K^d_\la$ is defined as the convex hull
of~$\cP^d_\la\cap\dB^d$,
where $\dB^d:=\{x\in\RR^d:\lvert x\rvert\le1\}$ is the Euclidean unit ball
of~$(\RR^d,\|{{}\cdot{}}\|)$, with the Euclidean norm~$\|{{}\cdot{}}\|$
derived from the usual inner product~$\langle\cdot,\cdot\rangle$. We recall that $\lvert{{}\cdot{}}\rvert$ denotes the $d$-dimensional Lebesgue measure of~$\RR^d$. In both notations~$\|{}\cdot{}\|$ and $\lvert{{}\cdot{}}\rvert$, the dependency on~$d$ is implicit.

A~priori, the support function~$\sf$ introduced in~\eqref{eq:support-function} takes
values in~$\{-\infty\}\cup\oo{-1}{1}$, but if~$\la$ is not chosen too small,
then the polytope~$K^d_\la$ will likely contain the origin, which means that~$\sf\ge0$.
\begin{lemma}[Containing the origin]\label{lem:origin}
    If \cref{eq:assumption-origin} holds,
    then $\PP(0\in K^d_\la)\to1$ as~$d\to\infty$.

    \noindent
    If however $\limsup_{d\to\infty}\frac{\la\ka_d}d<2$,
    then $\PP(0\in K^d_\la)\to0$ as~$d\to\infty$.
\end{lemma}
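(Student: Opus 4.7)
The plan is to combine Wendel's classical formula~\cite{Wendel62} with Poissonization and a concentration bound.

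First, I would condition on $N := \lvert\cP^d_\la \cap \dB^d\rvert$, which is Poisson with mean~$\la\ka_d$. Given $N = n \ge 1$, the $n$ points are i.i.d.\ uniform on the origin-symmetric set~$\dB^d$, so Wendel's identity yields
\[
  \PP\bigl(0 \notin K^d_\la \bigm| N = n\bigr)
    = \frac{1}{2^{n-1}}\sum_{k=0}^{d-1}\binom{n-1}{k}
    = \PP(S_{n-1} \le d - 1),
\]
where $S_{m}$ denotes a binomial variable with parameters $m$ and~$1/2$. Averaging over~$N$ (the boundary term $\PP(N = 0) = e^{-\la\ka_d}$ is negligible in both regimes under consideration) reduces the problem to analyzing $\EE\bigl[\PP(S_{N-1} \le d - 1 \mid N)\bigr]$. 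Since the conditional mean of $S_{N-1}$ is~$(N-1)/2$ while $\EE[N]=\la\ka_d$, the phase transition naturally appears around~$\la\ka_d = 2d$, exactly as stated.

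For the first half, under \cref{eq:assumption-origin} I would fix $\eta > 0$ such that $\la\ka_d \ge (2 + 2\eta)d$ for $d$ large, and invoke Poisson concentration (a Chernoff bound suffices) to obtain $N \ge (2 + \eta)d$ with probability tending to~$1$. On this event the conditional mean $(N-1)/2$ exceeds $d - 1$ by an amount of order~$d$, so Hoeffding's inequality gives $\PP(S_{N-1} \le d - 1 \mid N) \le \exp(-cd)$ for some $c > 0$. Taking expectations then yields $\PP(0 \in K^d_\la) \to 1$.

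The converse is symmetric. Under $\limsup \la\ka_d / d < 2$, I would choose $\eta > 0$ so that $\la\ka_d \le (2 - 2\eta)d$ for $d$ large, and deduce by Poisson concentration that $N \le (2 - \eta)d$ with high probability. The conditional mean of $S_{N-1}$ then falls short of $d - 1$ by an amount of order~$d$, and Hoeffding bounds $\PP(S_{N-1} > d - 1 \mid N) \le \exp(-c'd)$, giving $\PP(0 \notin K^d_\la) \to 1$. I do not anticipate any serious obstacle: the argument is essentially a textbook combination of Wendel's identity, Poisson concentration, and Hoeffding's inequality, with only a small amount of bookkeeping needed to handle the degenerate case $N = 0$ and to fix constants that quantitatively reflect the strict inequalities in the two hypotheses.
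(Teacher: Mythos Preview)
Your proposal is correct and follows essentially the same route as the paper: condition on the Poisson count~$N$, apply Wendel's formula, rewrite the resulting sum as a binomial tail $\PP(S_{N-1}\le d-1)$, and then use concentration to compare $(N-1)/2$ with~$d$. The only cosmetic difference is that the paper compresses your two-step concentration (Poisson Chernoff for~$N$, then Hoeffding for the binomial) into a single appeal to the law of large numbers for~$S_{N-1}$, noting parenthetically that large-deviation theory gives exponential rates; your explicit two-stage argument is a perfectly valid way to flesh this out.
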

\begin{proof}[Proof of \cref*{lem:origin}]
    The number $N:=N(d)$ of points in~$K^d_\la$
    has a Poisson distribution with
    mean~$\la\ka_d$.
    Further, conditional on~$N$, those points have a symmetric distribution in~$\RR^d$.
    It follows from Wendel's formula~\cite{Wendel62} that
    \begin{align*}
        \PP\bigl(0\notin K^d_\la\;\big|\;N\bigr)
        &=\II_{\{N\le d\}}
        +\II_{\{N>d\}}2^{-(N-1)}\sum_{k=0}^{d-1}\binom{N-1}k\\[.4em]
        &=1-\II_{\{N>d\}}2^{-(N-1)}\sum_{k=d}^{N-1}\binom{N-1}k\\[.4em]
        &=1-\PP(S_{N-1}\ge d\mid N),
        \intertext{where $S_n,\,n\ge0,$ are Binomial$(n,\frac12)$ random
                variables independent of~$N$. Hence}
    \PP\bigl(0\in K^d_\la)&=\PP(S_{N-1}\ge d).
  \end{align*}
  By the law of large numbers,
  $\PP(S_{N-1}\ge d)\to1$ as~$d\to\infty$ if
  \[\liminf_{d\to\infty}\frac1d\EE[S_{N-1}]>1,\]
  that is (since $\EE[S_{N-1}]=\frac{\la\ka_d-1}2$), if
  $\la\ka_d\ge(2+\eps)d$
  holds for some~$\eps>0$ and all~$d$ sufficiently large;
  similarly, $\PP(S_{N-1}\ge d)\to0$ if
  $\la\ka_d\le(2-\eps)d$ holds for some~$\eps>0$ and all~$d$ sufficiently large. (In fact, by classical large deviation
  theory, these two convergences occur exponentially fast.)
\end{proof}

Taking \cref{eq:assumption-origin} for granted,
we thus have $0\le \sf\le1$ w.h.p.\ as~$d\to\infty$. Now,
\[\PP\left(\sf\le r\right)=\PP\left(\sf(u)\le r\right),\quad0\le r\le 1,\]
for, e.g., $u:=(1,0,\ldots)\in\dS^{d-1}$; we compute this probability as
\[\PP\left(\cP^d_\la\cap\cC^d(r;u)=\emptyset\right)
=\ee^{-\la\vol{\cC^d(r;u)}},\]
where the spherical cap $\cC^{d}(r;u):=\{x\in\dB^d:\langle u,x\rangle>r\}$ has volume
\begin{equation}
\vol{\cC^{d}(r;u)}
=\ka_{d-1}\int_r^1{(1-t^2)}^{\frac{d-1}2}\,\dd t
=\frac{\ka_{d-1}}2\int_0^{1-r^2}v^{\frac{d-1}2}{(1-v)}^{-\frac12}\,\dd v,\label{eq:volume-spherical-cap}
\end{equation}
with the last integral resulting from the change of variable $v\gets1-t^2$.
Hence
\begin{equation}
    \PP\left(\sf\le r\right)=\exp\left(-\frac{\la\ka_{d-1}}2\lb\bigl(1-r^2;\tfrac{d+1}2,\tfrac12\bigr)
    \right),\label{eq:cdf-support-function}
\end{equation}
where
\begin{equation*}
    \lb(x;p,q):=\int_0^xv^{p-1}(1-v)^{q-1}\,\dd v%
    ,\quad x\in\cc{0}{1},\,p,q>0,%
    \end{equation*}
is the \emph{lower incomplete beta function} (the complete beta function
is $\cb(p,q):=\lb(1;p,q)$).
We will see in \cref{sec:multidirectional-support-function}
that when considering the support function
over~$m\ge2$
directions at once, the distribution function of the
infimum~$\sf{d,m}$ also involves this special function
(with the third parameter~$q=\frac12$
replaced by~$q=\frac m2$)
as well as the volume of unit balls.
Thus, the asymptotic behavior of~$\sf$ and of~$\sf{d,m}$
will depend on the interplay between
the intensity~$\la:=\la(d)$
and the two quantities~$\ka_n$ and~$\lb(x;p,q)$,
where~$n,x$ and~$p$ may depend on the dimension~$d$.

Regarding~$\ka_d$,
we will essentially use the asymptotic relation
\begin{equation}
    \ka_{d-1}
   =\ka_d\,\sqrt\frac d{2\pi}\Biggl[1+O\left(\frac1d\right)\Biggr],\label{eq:rapport-kappa}
\end{equation}
which is easily derived from the well-known formula
\begin{equation}
  \ka_d=\frac{\pi^{\frac d2}}{\Gamma\left(1+\frac d2\right)}
  \label{eq:volume-ball}
\end{equation}
and Stirling's formula for Euler's gamma function~$\Gamma$
(see, e.g.,~\cite[(3.24)]{Temme96}).
As for the incomplete beta function, we can derive basic first-order estimates:
\begin{lemma}[Estimates of the incomplete beta function]\label{lem:estimates-incomplete-beta-function}
        Let~$p,q>0$.
    Then for any $x\in\oo{0}{1}$ such that $(p+q)x<p+1$,
    \[\text{the ratio}\quad{\lb(x;p,q)}\:\Big/\:{\frac{x^p{(1-x)}^{q-1}}p}\quad
    \text{lies between~$1$ and $\dfrac1{1-\frac{q-1}{p+1}\cdot\frac x{1-x}}$}.\]
    In particular, when $x\in\oo{0}{1}$ and~$p,q>0$ are three sequences indexed by~$d$:\\[.4em]

    \noindent 1.\enspace If $p\gg\dfrac{|q-1|x}{1-x}$, then
        \begin{align*}
                \lb(x;p,q)&=\frac{x^p{(1-x)}^{q-1}}p\left[1+O\left(\frac{|q-1|x}{p(1-x)}\right)\right].
        \intertext{2.\enspace If $p\gg\dfrac{|q-1|}{1-x}$, then}
        \frac{\lb(x;p,q)}{\cb(p,q)}&=\frac{x^p{\left[(1-x)p\right]}^{q-1}}{\Gamma(q)}\left[1+O\left(\frac{|q-1|}{p(1-x)}\right)\right].
        \end{align*}
\end{lemma}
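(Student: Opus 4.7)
The plan is to deduce the two-sided estimate on the ratio
\[R(x) := \frac{p\,\lb(x;p,q)}{x^p(1-x)^{q-1}}\]
from two ingredients: an elementary monotonicity bound and a calculus argument via a well-chosen auxiliary function.

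First I would observe that on $\oo{0}{x}$ the map $v\mapsto(1-v)^{q-1}$ is monotone---decreasing if $q>1$, increasing if $q<1$. Comparing it with its value at $v=x$ and integrating $\int_0^x v^{p-1}\,\dd v=x^p/p$ yields $R(x)\ge 1$ when $q\ge 1$ and $R(x)\le 1$ when $q\le 1$, which already delivers one of the two inequalities claimed in the statement.

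For the opposite inequality I would introduce
\[F(x):=\frac{(p+1)x^p(1-x)^q}{p}-\bigl[(p+1)(1-x)-(q-1)x\bigr]\lb(x;p,q),\]
whose sign encodes exactly the desired bound, noting that $(p+1)(1-x)-(q-1)x=(p+1)-(p+q)x>0$ under the hypothesis $(p+q)x<p+1$. Since $F(0)=0$, differentiating (using $\frac{\dd}{\dd x}\lb(x;p,q)=x^{p-1}(1-x)^{q-1}$) and collapsing the $x^{p-1}(1-x)^{q-1}$ terms leaves
\[F'(x)=(p+q)\left[\lb(x;p,q)-\frac{x^p(1-x)^{q-1}}{p}\right].\]
By the monotonicity step, $F'(x)$ has the sign of $q-1$, and integrating from $0$ shows $F(x)$ inherits that sign. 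Dividing through by the positive factor $(p+1)(1-x)-(q-1)x$ then sandwiches $R(x)$ between $1$ and $1/[1-\tfrac{q-1}{p+1}\cdot\tfrac{x}{1-x}]$, completing the main bound.

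The two asymptotic consequences follow by expansion. Item~1 is just $1/(1-\epsilon)=1+O(\epsilon)$ applied to $\epsilon:=\tfrac{q-1}{p+1}\cdot\tfrac{x}{1-x}=O(|q-1|x/(p(1-x)))$. For item~2, I would combine this with the classical Stirling expansion $\cb(p,q)=\Gamma(p)\Gamma(q)/\Gamma(p+q)=\Gamma(q)p^{-q}(1+O(q|q-1|/p))$, obtained from the standard asymptotic of $\Gamma(p+q)/\Gamma(p)$ (see, e.g.,~\cite{Temme96}), and simplify $p\cdot p^{q-1}=p^q$; in the bounded-$q$ regime relevant to the sequel, the Stirling error fits inside the announced $O(|q-1|/(p(1-x)))$. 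The main obstacle is recognizing the auxiliary function $F$---once written down, the rest of the argument is essentially calculus.
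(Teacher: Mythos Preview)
Your argument is correct but follows a genuinely different route from the paper. The paper invokes the series representation
\[\lb(x;p,q)=\frac{x^p(1-x)^q}{p}\sum_{n\ge0}\frac{(p+q)_n}{(p+1)_n}\,x^n\]
from~\cite[(11.33)]{Temme96}, observes that each Pochhammer ratio lies between~$1$ and~$\bigl(\tfrac{p+q}{p+1}\bigr)^n$ (on one side or the other according to the sign of~$q-1$), and sums the two resulting geometric series; the hypothesis $(p+q)x<p+1$ is what makes the larger geometric series converge. Your approach replaces this external identity with a pure calculus argument: the monotonicity of $(1-v)^{q-1}$ gives one inequality, and the auxiliary function~$F$ (whose derivative you correctly compute) gives the other. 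This is more self-contained---no special-function identity is needed---at the price of having to guess~$F$, which you rightly flag as the only non-mechanical step. The series route, by contrast, makes the structure of the error visible term by term. For the asymptotic consequences both proofs proceed identically; the paper also simply cites~\cite[(3.31)]{Temme96} for $\cb(p,q)=\Gamma(q)p^{-q}[1+O(|q-1|/p)]$, so your remark that the Stirling error is harmless in the bounded-$q$ regime used later is consistent with the paper's own level of detail.
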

\begin{proof}
    For $x\in\oo{0}{1}$
    we have from~\cite[(11.33)]{Temme96} the series representation
    \[B(x;p,q)=\frac{x^p{(1-x)}^q}p\sum_{n=0}^\infty\frac{{(p+q)}_n}{{(p+1)}_n}\,x^n,\]
    where the ratio of Pochhammer symbols
    \[
        \frac{{(p+q)}_n}{{(p+1)}_n}:=\frac{p+q}{p+1}\cdot\frac{p+q+1}{p+2}\cdots\frac{p+q+n-1}{p+n}
    \]
    belongs to $\cc[scaled]{1}{{(\frac{p+q}{p+1})}^{\!n}}$ if~$q\ge1$,
    and to $\cc[scaled]{{(\frac{p+q}{p+1})}^{\!n}}{1}$ if~$0<q<1$.
    Now if $(p+q)x<p+1$, then
    \[\sum_{n=0}^\infty{\left(\frac{p+q}{p+1}\right)}^{\!n}x^n
    =\dfrac1{1-\frac{p+q}{p+1}x}=\dfrac1{1-x}\cdot\frac1{1-\frac{q-1}{p+1}\cdot\frac x{1-x}},\]
    hence the lower and upper bounds on $\lb(x;p,q)/{\frac{x^p{(1-x)}^{q-1}}p}$.
    The first stated asymptotic estimate is an immediate consequence of these bounds,
    while the second one follows from the first one combined with
    \[\cb(p,q)=\Gamma(q)\cdot\frac{\Gamma(p)}{\Gamma(p+q)}=\frac{\Gamma(q)}{p^q}\left[1+O\left(\frac{|q-1|}p\right)\right],\]
    see, e.g.,~\cite[(3.31)]{Temme96}.
\end{proof}

\section{The support function in one direction}\label{sec:one-directional-support-function}
We are now ready to establish \cref{thm:regimes,thm:main-one-directional}:
we do so by recalling the distribution function~\eqref{eq:cdf-support-function}
of the support function~$\sf$, then plug in
asymptotics for the incomplete beta function (\cref{lem:estimates-incomplete-beta-function})
and for the volume of the Euclidean unit balls~\eqref{eq:rapport-kappa}.

\noindent To start with, we identify the limit of~$\sf$ in probability.
\begin{lemma}[Limit in probability of the support function]\label{lem:limit-support-function}
  Under the assumption of \cref{thm:regimes},
  \[\lim_{d\to\infty}\sf=\sqrt{1-\ee^{-2x}},\quad\text{in probability}\]
  (with the convention $\ee^{-\infty}=0$).
\end{lemma}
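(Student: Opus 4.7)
The plan is to exploit the explicit cumulative distribution function~\eqref{eq:cdf-support-function} and prove pointwise that $\PP(\sf\le r)\to\II_{r>L}$ for every $r\in\oo{0}{1}\setminus\{L\}$, where $L:=\sqrt{1-\ee^{-2x}}$. By~\cref{lem:origin} the random variable $\sf$ lies in $\cc{0}{1}$ with high probability, so such pointwise convergence of the distribution function at every continuity point of the limiting Heaviside step at~$L$ is equivalent to convergence of $\sf$ to~$L$ in probability.

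Fix $r\in\oo{0}{1}$. Item~1 of \cref{lem:estimates-incomplete-beta-function} applies with $p=\tfrac{d+1}{2}$, $q=\tfrac12$, and first argument $1-r^2$, since its hypothesis reads $\tfrac{d+1}{2}\gg(1-r^2)/(2r^2)$ and trivially holds for fixed $r$; this gives
\[\lb\bigl(1-r^2;\tfrac{d+1}{2},\tfrac12\bigr)\sim\frac{2(1-r^2)^{(d+1)/2}}{(d+1)r}.\]
Combined with~\eqref{eq:rapport-kappa}, this expresses the exponent in~\eqref{eq:cdf-support-function} as
\[F_d(r):=-\log\PP(\sf\le r)\sim\frac{\la\ka_d}{(d+1)r}\sqrt{\frac d{2\pi}}\,(1-r^2)^{(d+1)/2},\]
so that
\[\log F_d(r)=\log(\la\ka_d)+\frac d2\log(1-r^2)+O(\log d).\]

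It remains to discuss the sign of the leading-order terms in each regime. In the critical regime $x\in\oo{0}{\infty}$, $\log(\la\ka_d)\sim dx$, so the leading behavior of $\log F_d(r)$ is $\tfrac d2\bigl(2x+\log(1-r^2)\bigr)$: this tends to $+\infty$ when $r<L$ (i.e., $1-r^2>\ee^{-2x}$), giving $\PP(\sf\le r)\to 0$, and to $-\infty$ when $r>L$, giving $\PP(\sf\le r)\to 1$. In the subcritical regime $x=0$ we have $L=0$, and for any fixed $r>0$ the linear term $\tfrac d2\log(1-r^2)\to-\infty$ dominates $\log(\la\ka_d)=o(d)$, so $F_d(r)\to 0$ and $\PP(\sf\le r)\to 1$, i.e., $\sf\to 0$ in probability. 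In the supercritical regime $x=\infty$ we have $L=1$, and for any fixed $r<1$ the term $\log(\la\ka_d)\gg d$ dominates $\tfrac d2\log(1-r^2)=O(d)$, so $F_d(r)\to\infty$ and $\PP(\sf\le r)\to 0$, i.e., $\sf\to 1$ in probability.

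There is no substantive obstacle here: once the two asymptotics~\cref{lem:estimates-incomplete-beta-function} and~\eqref{eq:rapport-kappa} are available, the argument is a careful bookkeeping of leading-order terms. The harder work will come when proving the sharper equivalents of \cref{thm:regimes} (precise asymptotics of $\sf$ and of $1-\sf$ in the sub- and supercritical cases) and the Gumbel limit of \cref{thm:main-one-directional}, which require pushing this expansion to the next order and letting $r$ scale suitably with~$d$.
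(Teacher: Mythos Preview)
Your proof is correct and follows essentially the same approach as the paper's: fix $r\in\oo{0}{1}$, apply \cref{lem:estimates-incomplete-beta-function} and~\eqref{eq:rapport-kappa} to expand $-\log\PP(\sf\le r)$, and read off the dichotomy for $\PP(\sf\le r)$ from the sign of $\frac1d\log(\la\ka_d)+\log\sqrt{1-r^2}$. The paper presents the three regimes in a single unified line rather than treating them separately, but the substance is identical.
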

\begin{proof}
    We start by applying \cref{lem:estimates-incomplete-beta-function} with only the second argument of
    the incomplete beta function depending on~$d$: for any fixed $r\in\oo{0}{1}$,
    \begin{align}
        \lb\left(1-r^2;\tfrac{d+1}2,\tfrac12\right)
        &=\frac{2{(1-r^2)}^{\frac{d+1}2}}{rd}\left[1+O\left(\frac1d\right)\right].\label{eq:asymptotics-incomplete-beta}
        \intertext{Inserting this and
        the other estimate~\eqref{eq:rapport-kappa} into~\eqref{eq:cdf-support-function} then yields}
        -{\log{\PP\left(\sf\le r\right)}}
        &=\frac{\la\ka_d{(1-r^2)}^{\frac{d+1}2}}{r\sqrt{2\pi d}}\left[1+O\left(\frac1d\right)\right]\notag\\[.4em]
        &=\exp\left\{\log{\la\ka_d}+\frac{d+1}2\log(1-r^2)
        -\log{r\sqrt{2\pi d}}+O\left(\frac1d\right)\right\}\label{eq:exponent-cdf-support-function}\\[.4em]
        &=\exp\left\{d\left(\frac{\log{\la\ka_d}}d+\log{\sqrt{1-r^2}}+o(1)\right)\right\}.\notag
    \intertext{%
    Since $\frac{\log{\la\ka_d}}d\to x\in\cc{0}{\infty}$ as~$d\to\infty$, the change of sign
    in this exponent provides the required threshold,
    i.e.,}
    \lim_{d\to\infty}\PP\left(\sf\le r\right)
        &=\begin{cases}
            0&\text{if $r<\sqrt{1-\ee^{-2x}}$},\\
            1&\text{if $r>\sqrt{1-\ee^{-2x}}$}.
        \end{cases}\notag
    \end{align}
    This proves the convergence in distribution of~$\sf$ towards the constant $\sqrt{1-\ee^{-2x}}$, which is equivalent to the convergence in probability to the same limit.
\end{proof}
By also letting the first argument of the incomplete beta function depend on~$d$,
a deeper application of \cref{lem:estimates-incomplete-beta-function} enables us to complete the proof of
Theorems~\ref{thm:regimes} and~\ref{thm:main-one-directional}.
\begin{proof}[Proof of \cref*{thm:main-one-directional}]
    We apply \cref{lem:estimates-incomplete-beta-function} again
    by letting $r\in\oo{0}{1}$ in the previous proof depend on~$d$.
    Whenever~$dr^2\gg1$ holds, we may still write
    (instead of~\eqref{eq:asymptotics-incomplete-beta})
    \[
      \lb\left(1-r^2;\tfrac{d+1}2,\tfrac12\right)
        \sim\frac{2{(1-r^2)}^{\frac{d+1}2}}{rd},
    \]
    which leads to the following weakening of~\eqref{eq:exponent-cdf-support-function}:
\begin{equation}
  {-{\log{\PP\left(\sf\le r\right)}}}
    =\exp\left\{\log{\la\ka_d}+\frac{d+1}2\log(1-r^2)
    -\log{r\sqrt{2\pi d}}+o(1)\right\}.\label{eq:exponent-cdf-support-function-2}
\end{equation}
    Fix~$\tau\in\RR$ and choose $r:=r(d;\tau)$ as the unique solution 
    to the equation
    \begin{equation}
    \log{\la\ka_d}+\frac{d+1}2\log(1-r^2)
    -\log{r\sqrt{2\pi d}}
    =-\tau,\quad\text{namely}
    \enspace
    \frac{{(1-r^2)}^{\frac{d+1}2}}r=\frac{\sqrt{2\pi d}}{\la\ka_d}\ee^{-\tau}.%
    \label{eq:implicit-definition-r}
    \end{equation}
    In particular, under~\cref{eq:assumption-origin},
    \[-\frac{d+1}2\log(1-r^2)\ge\log{\frac{\la\ka_d}{\sqrt{2\pi d}}}+\tau\to\infty,\]
    so that, indeed, $dr^2\gg1$ and~\eqref{eq:exponent-cdf-support-function-2} is true.
    Inserting~\eqref{eq:implicit-definition-r}
    there then yields
    \[\lim_{d\to\infty}\PP\left(\sf\le r\right)
    =\ee^{-\ee^{-\tau}},\]
    which is the c.d.f.\ of the standard Gumbel distribution.
    Now, on the one hand, we observe that
    \begin{align*}
        \PP\left(\sf\le r\right)
        &=\PP\left[-\frac{d+1}2\log\left(1-{\bigl(\sf\bigr)}^2\right)-\log\frac{\la\ka_d}{r\sqrt{2\pi d}}\le\tau\right]\\[.4em]
        &=\PP\left[d\left(\log{\frac1{\sqrt{1-{\bigl(\sf\bigr)}^2}}}-\frac1{d+1}\log{\la\ka_d}\right)
        +\frac{\log{r\sqrt{2\pi d}}}{1+O\left(\frac1d\right)}\le\tau+O\left(\frac1d\right)\right].
    \end{align*}
    On the other hand, we have from~\eqref{eq:implicit-definition-r},
    \begin{align*}
      1-r^2&=r^{\frac2{d+1}}{(\la\ka_d)}^{-\frac2{d+1}}\left[1+O\biggl(\frac{\log d}d\biggr)\right],
    \intertext{and}
      \frac1d\log r&=\frac1d\log{\la\ka_d}
      +O\left(-{\log\bigl(1-r^2\bigr)}\right),
    \end{align*}
    from which it follows that
    \[\left\{\begin{aligned}
        r^2&\sim\dfrac2d\log{\la\ka_d},&&\text{if $\log{\la\ka_d}\ll d$},\\[.4em]
        r^2&\to1-\ee^{-2x},&&\text{if $\log{\la\ka_d}\sim dx$ with $x\in\oo{0}{\infty}$},\\[.4em]
        1-r^2&\sim{(\la\ka_d)}^{-\frac2{d+1}},&&\text{if $\log{\la\ka_d}\gg d$}.
    \end{aligned}\right.\]
    This allows us to conclude that
    \[\PP\left[d\left(\log{\frac1{\sqrt{1-{\bigl(\sf\bigr)}^2}}}-\frac1{d+1}\log{\la\ka_d}
    \right)+\log{\sqrt{\mathfrak m(d)}}\le\tau\right]
    \xrightarrow[d\to\infty]{}\:\ee^{-\ee^{-\tau}},\]
    with~$\mathfrak m(d)$ as stated.
\end{proof}

\begin{proof}[Proof of \cref*{thm:regimes}]
  It follows from \cref{thm:main-one-directional} that
  \[
      d\left(\log{\frac1{\sqrt{1-{\bigl(\sf\bigr)}^2}}}-\frac1{d+1}\log{\la\ka_d}
      \right)+\log{\sqrt{\mathfrak m(d)}}
      =O_\dP(1),
  \]
  where $X(d)=O_{\dP}(1)$ means that
  $\lim_{A\to\infty}\limsup_{d\to\infty}\PP(|X(d)|>A)=0$.
  Multiplying this equation by~$-\frac2d$ and taking the exponential function,
  we deduce that
  \[{(\la\ka_d)}^{\frac2{d+1}}\left(1-{\bigl(\sf\bigr)}^2\right)
  =1+\frac1{\log d}\cdot O_\dP\left(1\right),\]
  where we discarded the ${\mathfrak m(d)}^{-\frac1d}$ term
  because $\mathfrak m(d)=O(d)$.
  In the subcritical regime
  $\log{\la\ka_d}\ll d$, we obtain that
  in probability as~$d\to\infty$,
  \begin{align*}
  \sf&\sim\sqrt{\frac2d\log{\la\ka_d}}
\intertext{(because then
${1-(\la\ka_d)}^{\frac2{d+1}}\sim-{\frac2d\log{\la\ka d}}$).
In the supercritical regime $\log{\la\ka_d}\gg d$, we get instead}
  1-\sf&\sim\frac12{(\la\ka_d)}^{-\frac2{d+1}}
  \end{align*}
  because, by \cref{lem:limit-support-function}, $1+\sf\to2$ in probability.
  This completes the proof of \cref{thm:regimes}.
\end{proof}

\section{The infimum of the support function over multiple directions}\label{sec:multidirectional-support-function}
In this section,
we extend the study of the asymptotic behavior of the support function
when considered over several directions simultaneously. Namely,
we consider
\[\sf{d,m}\;:=\inf_{u\in\dS^{d-1}\cap\RR^m}\sf(u),\]
where~$m\ge2$ is a fixed integer. \Cref{sec:reduction-to-covering-problem} consists in reinterpreting the distribution of~$\sf{d,m}$ in terms of a covering probability. We then present in~\cref{sec:janson-covering} a remarkable covering technique due to Janson~\cite{Janson86} which we specialize to our setting. This finally allows us to prove \cref{thm:main}.
\subsection{Reduction to a covering problem.}\label{sec:reduction-to-covering-problem}
We start by relating the tail event~$\sf{d,m}\ge r$
to the event of covering the sphere~$\dS^{m-1}:=\{y\in\RR^m:\|y\|=1\}$
with i.i.d.\ geodesic balls. In this direction,
we write $v_{\dS^{m-1}}(\dd x)$
for
the $(m-1)$-dimensional surface measure on~$\dS^{m-1}$
(so that $v_{\dS^{m-1}}(\dS^{m-1})=m\ka_m$),
and
\[B_{\dS^{m-1}}(x,\theta):=\Bigl\{y\in\RR^m:\|y\|=1\text{ and }\langle x,y\rangle>\cos\theta\Bigr\}\]
for
the geodesic ball in~$\dS^{m-1}$ with center~$x\in\dS^{m-1}$ and radius~$\theta\in\oc{0}{\pi}$.
\begin{lemma}[Covering the sphere]\label{lem:reduction-to-covering-problem}
  For every $r\in\oo{0}{1}$, we have
  \begin{equation}
    \sf{d,m}\ge r\iff
    \dS^{m-1}%
    =
    \bigcup_iB_{\dS^{m-1}}(x_i,a\rho_i),\label{eq:covering-sphere}
  \end{equation}
  where
  \begin{equation}
    a:=\frac1{\sqrt d}\,\frac{\sqrt{1-r^2}}r,\label{eq:vanishing-parameter-a}
  \end{equation}
  and the centers and radii~$(x_i,\rho_i)$ arise
  as the atoms of a Poisson point process on~$\dS^{m-1}\times\oo{0}{\infty}$
  whose intensity measure $\La^{(d,m)}_rv_{\dS^{m-1}}(\dd x)\otimes\PP(R^{(d,m)}_r\in\dd\rho)$
  is given by
  \begin{align}
    \La^{(d,m)}_r
    &:=\la\ka_d\cdot\frac{\lb\left(1-r^2;1+\frac{d-m}2,\frac m2\right)}{\cb\left(1+\frac{d-m}2,\frac m2\right)},%
    \label{eq:covering_intensity}
    \intertext{and, for every~$\rho>0$,}
    \PP(R^{(d,m)}_r>\rho)
    &:=\dfrac{\lb\left(1-r^2\cos^{-2}(a\rho);1+\frac{d-m}2,\frac m2\right)}{\lb(1-r^2;1+\frac{d-m}2,\frac m2)}
    \II_{\left\{\rho<\frac{\arccos r}a\right\}}\!.
    \label{eq:covering-radius}
  \end{align}
\end{lemma}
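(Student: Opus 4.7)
My plan is to encode the event $\sf{d,m}\ge r$ as a random covering of $\dS^{m-1}$ by geodesic caps and then identify the law of the underlying marked Poisson point process via the mapping theorem and explicit changes of variable. The covering reformulation is forced by linearity: for a Poisson point $X\in\cP^d_\la\cap\dB^d$, writing $X=(X',X'')$ with $X'\in\RR^m$, the set of directions $u\in\dS^{m-1}\cap\RR^m$ with $\langle u,X\rangle=\langle u,X'\rangle>r$ is empty if $\|X'\|\le r$, and coincides with the geodesic ball $B_{\dS^{m-1}}(X'/\|X'\|,\arccos(r/\|X'\|))$ otherwise. Setting $\rho:=\arccos(r/\|X'\|)/a$ with $a$ as in~\eqref{eq:vanishing-parameter-a}, this produces the covering relation on the right of~\eqref{eq:covering-sphere}.

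Next, I would compute the law of the marked process $(X'/\|X'\|,\rho)$. By the mapping theorem for Poisson processes, the projections $\{X'\}$ form a Poisson process on $\dB^m$ with intensity density equal to the $(d-m)$-volume of the vertical fibre, namely $\la\ka_{d-m}(1-\|x'\|^2)^{(d-m)/2}$. Passing to spherical coordinates $x'=s\omega$ gives intensity $\la\ka_{d-m}(1-s^2)^{(d-m)/2}s^{m-1}\,\dd s\,v_{\dS^{m-1}}(\dd\omega)$; then the successive substitutions $\theta:=\arccos(r/s)\in(0,\arccos r)$ and $\rho:=\theta/a$, using $s^{m-1}\dd s=r^m\sin\theta\cos^{-(m+1)}\theta\,\dd\theta$, push this forward to
\[
  a\la\ka_{d-m}r^m\sin(a\rho)\cos^{-(m+1)}(a\rho)\bigl(1-r^2\cos^{-2}(a\rho)\bigr)^{(d-m)/2}\,\dd\rho\otimes v_{\dS^{m-1}}(\dd\omega)
\]
on $\dS^{m-1}\times(0,\arccos(r)/a)$.

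Finally, I would factor this intensity into the claimed product form. Differentiating the tail~\eqref{eq:covering-radius} via the chain rule and the identity $\frac{\dd}{\dd u}\lb(u;p,q)=u^{p-1}(1-u)^{q-1}$ produces a density proportional to the $\rho$-dependent factor above, with normalising constant $\lb(1-r^2;1+\tfrac{d-m}2,\tfrac m2)$; this identifies the distribution of $R^{(d,m)}_r$, and the remaining prefactor rearranges into $\Lambda^{(d,m)}_r$ via~\eqref{eq:volume-ball} and the Euler beta--gamma identity. The main obstacle is not conceptual---rotational invariance and the mapping theorem make the reduction automatic---but rather the careful bookkeeping needed to recognise the pushed-forward density as the derivative of an incomplete beta function, and to match the resulting normalising constants across the substitution $\theta=\arccos(r/s)$.
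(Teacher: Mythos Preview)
Your proposal is correct and follows essentially the same route as the paper: both reduce $\sf{d,m}\ge r$ to covering $\dS^{m-1}$ by caps coming from the orthogonal projections $X'$ of the Poisson points onto~$\RR^m$, then identify the pushed-forward Poisson intensity on $\dS^{m-1}\times(0,\infty)$. The only difference is bookkeeping---the paper computes the tail of $R^{(d,m)}_r$ directly from the law of $\|X'\|^2$ (via~\eqref{eq:norm-density-projected-vertices}) and evaluates $\la|\cR^{d,m}_r|$ separately, whereas you carry the full change of variables $s\mapsto\theta=\arccos(r/s)\mapsto\rho=\theta/a$ through to a density and then recognise it as $-\frac{\dd}{\dd\rho}\PP(R^{(d,m)}_r>\rho)$; both routes land on the same incomplete-beta expressions.
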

\begin{proof}
Indeed,
$\sf{d,m}\ge r$ if and only the sphere $r\dS^{m-1}$ is entirely covered by
the Voronoi flower associated with the projection of~$K^d_\la$ onto~$\RR^m$,
that is, by the spherical patches
\[\left(\frac{X'}2+\frac{\|X'\|}2\,\dB^m\right)\cap r\dS^{m-1}
=r B_{\dS^{m-1}}\left(\frac{X'}{\|X'\|},R_{X'}\right),\]
where $R_{X'}:=\arccos(\frac r{\|X'\|})$
and the points~$X'$
are the orthogonal projections of the points in~$\cP^d_\la\cap\cR^{d,m}_r$,
with
\[\cR^{d,m}_r:=\Bigl\{x\in\dB^d:x_1^2+\cdots+x_m^2\ge r^2\Bigr\};\]
see \cref{fig:m-dimensional-section}.
\begin{figure}[t]
    \centering
    \begin{tikzpicture}[scale=4]
        \begin{scope}
        \draw (0, 0) node {$\times$} node[above right] {$O$}
                (0, 0) circle (1)
                (30:1.2) node {$\dB^d\cap\RR^m$};
        \draw[blue,dashed]
                (0, 0) circle (.75)
                (0, 0) -- (273.55730976192072:.75) node[midway,right] {$r$};
        \draw[red,very thick]
                (240:.45) circle (.45);
        \end{scope}
        \begin{scope}
            \clip (240:.45) circle (.45);
            \fill[red!20,even odd rule] (0,0) circle (.73) circle (.77);
            \fill[pattern=north east lines,even odd rule] (0,0) circle (.73) circle (.77);
        \end{scope}
        \draw (0, 0) -- (240:.9) node[above,left=2pt,midway] {$\|X'\|$}
                (240:.9)
                node {$\bullet$} node[above=2pt,right=2pt] {$X'$}
                (240:.2) arc (240:273.55730976192072:.2) node[midway,below] {$\theta$};
    \end{tikzpicture}
    \caption{Projection onto~$\RR^m$.
    We have $\sf{d,m}\ge r$ if and only if the sphere~$r\dS^{m-1}$ (dashed)
    is covered by the union of its intersection with each petal (in red) of the Voronoi
    flower whose corresponding vertex lies in~$\cR^{d,m}_r$.
    Each projected vertex~$X'$ yields a
    geodesic ball of~$\dS^{m-1}$ (hatched) of radius $\theta:=\arccos(r/\lvert X'\rvert)$.}%
    \label{fig:m-dimensional-section}
\end{figure}
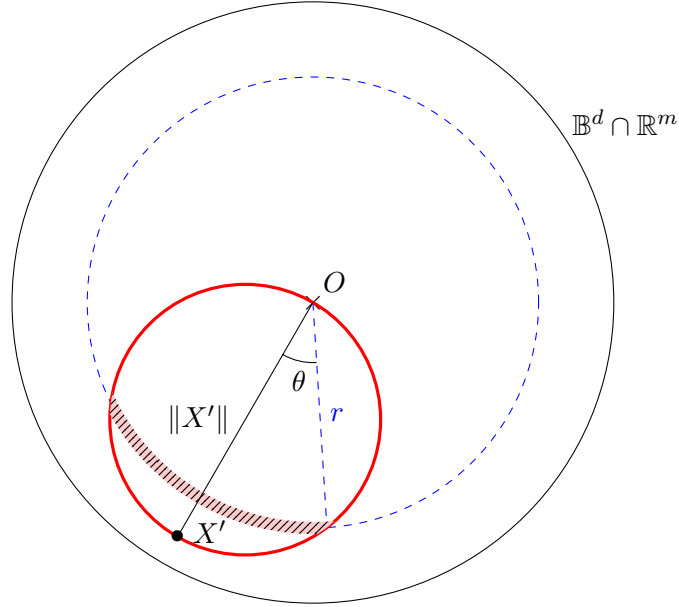
Considering a point~$X\in\cP^d_\la\cap\cR^{d,m}_r$,
the square of the norm of its orthogonal projection~$X'$
onto~$\RR^m$ has a law given by
\begin{equation}
    \dE\left[g\left({\lvert X'\rvert}^2\right)\right]=\dfrac{\int_{r^2}^1t^{\frac m2-1}{(1-t)}^{\frac{d-m}2}
        g(t)\,\dd t}{\lb\bigl(1-r^2;1+\tfrac{d-m}2,\tfrac m2\bigr)},
    \label{eq:norm-density-projected-vertices}
\end{equation}
for any measurable function $g\colon\co{0}{\infty}\to\co{0}{\infty}$.
We further note that the projections~$X'$ are identically distributed,
with~$X'/\|X'\|$ uniform on~$\dS^{m-1}$ (by rotational invariance).
We let $R^{(d,m)}_r$ denote a random variable with law
\begin{equation}
R^{(d,m)}_r\stackrel{(d)}=
\frac1aR_{X'}
=\frac1a\arccos{\frac r{\|X'\|}},\qquad\text{where}\quad a:=\frac1{\sqrt d}\,\frac{\sqrt{1-r^2}}{r}%
\label{eq:covering-radius-rayleigh}
\end{equation}
(this rescaling with the quantity~$a$ may seem arbitrary for the time being but is in fact designed for the future convergence in distribution in \cref{lem:covering-radius-convergence} and for the use of Janson's covering result in \cref{sec:janson-covering}).

We deduce that~$\sf{d,m}\ge r$ if and only if~$\dS^{m-1}$
is covered by the geodesic balls~$B_{\dS^{m-1}}(x,a\rho)$,
whose centers and radii~$(x,\rho)$
arise from a Poisson point process on~$\dS^{m-1}\times\oo{0}{\infty}$ with
intensity measure
\[\la\cdot|\cR^{d,m}_r|v_{\dS^{m-1}}(\dd x)\otimes\PP(R^{(d,m)}_r\in\dd\rho).\]
Now, the stated distribution function~\eqref{eq:covering-radius}
of~$R^{(d,m)}_r$ easily follows from~\eqref{eq:norm-density-projected-vertices}
and~\eqref{eq:covering-radius-rayleigh}:
\begin{align*}
\PP(R^{(d,m)}_r>\rho)
&=\PP\left({\|X'\|}^2>r^2\cos^{-2}(a\rho)\right)\\[.4em]
  &=\dfrac{\int_{r^2}^1t^{\frac m2-1}{(1-t)}^{\frac{d-m}2}
  \II_{\{t>r^2\cos^{-2}(a\rho)\}}\,\dd t}{\lb\bigl(1-r^2;1+\tfrac{d-m}2,\tfrac m2\bigr)}\\[.4em]
  &=\dfrac{\lb\left(1-r^2\cos^{-2}(a\rho);1+\frac{d-m}2,\frac m2\right)}{\lb(1-r^2;1+\frac{d-m}2,\frac m2)}
    \II_{\left\{\rho<\frac{\arccos r}a\right\}}\!.
\end{align*}
Furthermore,
\begin{align*}
\la\cdot\lvert\cR^{d,m}_r\rvert
&=\la\dotsint\II_{\{x_1^2+\cdots+x_m^2\ge r^2\}}
\ka_{d-m}{\left(1-x_1^2-\cdots-x_m^2\right)}^{\!\frac{d-m}2}\dd x_1\cdots\dd x_m\\[.4em]
&=\frac12\la m\ka_m\ka_{d-m}\int_{r^2}^1t^{\frac m2-1}{(1-t)}^{\frac{d-m}2}\dd t\\[.4em]
&=\la\ka_d\cdot\frac{\lb\left(1-r^2;1+\frac{d-m}2,\frac m2\right)}{\cb\left(1+\frac{d-m}2,\frac m2\right)}\\[.4em]
&=:\La^{(d,m)}_r,
\end{align*}
where the second equality comes from the use of spherical coordinates,
and the third equality is due to the expression~\eqref{eq:volume-ball}
for the volume of Euclidean balls and to the relation $\cb(a,b)=\Gamma(a)\Gamma(b)/\Gamma(a+b)$
between the beta and gamma functions.
\end{proof}
We are therefore reduced to understanding the probability
as~$d\to\infty$
of the covering event~\eqref{eq:covering-sphere}.
As we will see in the next section,
it turns out that Janson~\cite{Janson86} derived precise estimates
for the probability of covering a manifold of fixed dimension
using a Poisson process of i.i.d.\ patches whose intensity~$\La$
increases to infinity as the scale parameter~$a$ decreases to~$0$.
We conclude this section by showing that the random
radii~$R^{(d,m)}_r,\,r\in\oo{0}{1},$
have a common limit distribution as~$d\to\infty$.
This convergence will hold with respect
to the Monge-Kantorovitch-Wasserstein metric~$\Wd$,
which for two real random variables~$X$ and~$Y$ is given by
\[\Wd(X,Y)=\int_\RR\bigl|\PP(X>t)-\PP(Y>t)\bigr|\,\dd t\]
(see~\cite[Problem~2 p.\ 425]{Dudley02}).
We recall that the $\Wd$-convergence amounts to the convergence in distribution together with
the convergence of the first moment.
\begin{lemma}[Convergence of the patch radii]\label{lem:covering-radius-convergence}
    Let~$m\ge2$ be a fixed integer, let $r:=r(d)\in\oo{0}{1}$
    and let~$a$ as in~\eqref{eq:vanishing-parameter-a}.
    Suppose that
    \begin{equation}
      d\gg\frac1{r^2}+\log{\frac1{1-r}},\quad\text{or equivalently,}\enspace
      a+\frac1d\log{\frac1a}\to0.
      \label{eq:decay-vanishing-parameter}
    \end{equation}
    Then
    \begin{equation}
      \Wd\left(\log{R^{(d,m)}_r},\log R\right)=O\left(\frac1{dr^2}\log^2(dr^2)\right),\label{eq:W1-convergence-radius-rayleigh}
    \end{equation}
    where~$R$ is a standard Rayleigh random variable, with Lebesgue
    density $\rho\mapsto\rho\ee^{-\rho^2/2}$ on~$\oo{0}{\infty}$ and
    moments
    \begin{equation}
        \EE{R^k}=2^{\frac k2}\,\Gamma{\left(1+\frac k2\right)},
        \qquad k\in\ZZ_+.\label{eq:moments-rayleigh}
    \end{equation}
    Moreover, the convergence holds with the following upper bound:
    for every~$w>0$ and all~$d$ sufficiently large,
    \begin{equation}
      \sup_{\rho>0}\:\frac{\PP(R^{(d,m)}_r>\rho)}{\PP\left(\left[1+\frac w{\log{\frac1a}}\right]R>\rho\right)}\le1.\label{eq:uniform-rayleigh-upper-bound}
    \end{equation}
    \end{lemma}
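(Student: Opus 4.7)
The plan is to derive a precise asymptotic expansion of $\PP(R^{(d,m)}_r > \rho)$ via \cref{lem:estimates-incomplete-beta-function}, thereby identifying its pointwise limit as the standard Rayleigh survival function $\ee^{-\rho^2/2}$; the Wasserstein bound and the uniform dominator~\eqref{eq:uniform-rayleigh-upper-bound} will then follow by integrating and, respectively, sharpening this expansion. Specifically, set $p := 1+(d-m)/2 \asymp d$ and $q := m/2$. The arguments of the two incomplete beta functions in~\eqref{eq:covering-radius} satisfy $1-x_0 = r^2$ and $1-x_\rho = r^2\cos^{-2}(a\rho) \ge r^2$; since $q-1 = (m-2)/2$ is fixed, the error $\frac{|q-1|x}{p(1-x)}$ controlling the remainder in \cref{lem:estimates-incomplete-beta-function} is $O(1/(dr^2))$, which is $o(1)$ under~\eqref{eq:decay-vanishing-parameter}. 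This gives, for $\rho\in\oo{0}{\frac{\arccos r}{a}}$,
\begin{equation*}
    \PP(R^{(d,m)}_r > \rho) = \left(\frac{1 - r^2\cos^{-2}(a\rho)}{1-r^2}\right)^{\!p}\cos^{-(m-2)}(a\rho)\left[1+O\!\left(\frac{1}{dr^2}\right)\right].
\end{equation*}

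Taking logarithms, expanding $\cos^{-2}(a\rho) = 1 + (a\rho)^2 + O((a\rho)^4)$, and inserting the key identity $r^2 a^2/(1-r^2) = 1/d$ coming from~\eqref{eq:vanishing-parameter-a}, one obtains
\begin{equation*}
    p\log\!\left(\frac{1 - r^2\cos^{-2}(a\rho)}{1-r^2}\right) = -\frac{\rho^2}{2} + O\!\left(\frac{\rho^2}{d}+\frac{\rho^4}{dr^2}\right),
\end{equation*}
while the contribution $-\log\cos^{m-2}(a\rho) = O(a^2\rho^2)$ is of lower order. This identifies the Rayleigh survival function $\ee^{-\rho^2/2}$ as the pointwise limit, and~\eqref{eq:moments-rayleigh} follows immediately by the substitution $u = \rho^2/2$ in $\int_0^\infty\rho^{k+1}\ee^{-\rho^2/2}\,\dd\rho$.

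For the Wasserstein bound I use $\Wd(\log X, \log Y) = \int_0^\infty\lvert\PP(X>\rho) - \PP(Y>\rho)\rvert\,\dd\rho/\rho$ and split the integration at a threshold $T := T(d)$ chosen so that the Taylor expansion remains valid on $\oc{0}{T}$, e.g.\ $T \asymp (dr^2)^{1/4}/\log(dr^2)$. On this interval, the expansion yields $\lvert\PP(R^{(d,m)}_r>\rho) - \ee^{-\rho^2/2}\rvert \lesssim \ee^{-\rho^2/2}(\rho^2+\rho^4)/(dr^2)$, whose integral against $\dd\rho/\rho$ is bounded by a constant multiple of $\log^2(dr^2)/(dr^2)$; on $\co{T}{\infty}$, the Rayleigh tail and, thanks to the dominator~\eqref{eq:uniform-rayleigh-upper-bound}, the tail of $R^{(d,m)}_r$ are both smaller than any negative power of $dr^2$. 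Optimizing $T$ produces the announced rate.

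To establish~\eqref{eq:uniform-rayleigh-upper-bound} I sharpen the expansion into a non-asymptotic one-sided inequality: the monotonicity $\cos^{-2}(t) \ge 1+t^2$, together with $-\log(1-u) \ge u$ and a refined lower bound on the beta function ratio, gives
\begin{equation*}
    -\log\PP(R^{(d,m)}_r > \rho) \ge \frac{\rho^2}{2\bigl(1+w/\log(1/a)\bigr)^2}
\end{equation*}
for any prescribed $w > 0$ and all $d$ sufficiently large. The main obstacle is the region where $\rho$ approaches the cutoff $\frac{\arccos r}{a}$: there $\PP(R^{(d,m)}_r > \rho)$ drops abruptly to zero while the Rayleigh tail is still positive, and the pointwise Taylor expansion degrades. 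The uniform dominator~\eqref{eq:uniform-rayleigh-upper-bound} is precisely what handles this region uniformly in $r$---both when $r$ is small (so the condition $dr^2 \gg 1$ of~\eqref{eq:decay-vanishing-parameter} is only barely satisfied) and when $r$ is close to~$1$ (so $a\to 0$ but $\log(1/a)$ grows only moderately)---which is why the two parts of the statement are naturally proved in tandem.
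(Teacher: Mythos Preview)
Your strategy---apply \cref{lem:estimates-incomplete-beta-function} to both incomplete beta functions in~\eqref{eq:covering-radius}, Taylor-expand, and split the $\Wd$-integral---is natural and close to the paper's, but it has a genuine gap at small~$\rho$. When you quotient the two applications of \cref{lem:estimates-incomplete-beta-function}, the residual factor $[1+O(1/(dr^2))]$ is a \emph{uniform} multiplicative error that carries no power of~$\rho$; in particular it does not tend to~$1$ as $\rho\to0$, even though the exact ratio in~\eqref{eq:covering-radius} equals~$1$ there. Consequently your displayed expansion only gives
\[
\bigl|\PP(R^{(d,m)}_r>\rho)-\ee^{-\rho^2/2}\bigr|
\;\lesssim\;\ee^{-\rho^2/2}\left(\frac{\rho^2}{d}+\frac{\rho^4}{dr^2}\right)+\frac{C}{dr^2},
\]
and the last term makes $\int_0^T(\cdots)\,\frac{\dd\rho}{\rho}$ diverge logarithmically at~$0$; your asserted bound $\ee^{-\rho^2/2}(\rho^2+\rho^4)/(dr^2)$ does not follow. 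The same defect undermines~\eqref{eq:uniform-rayleigh-upper-bound}: the one-sided inequalities $\cos^{-2}t\ge1+t^2$ and $-\log(1-u)\ge u$ give at best
\[
-\log\PP(R^{(d,m)}_r>\rho)\ge\frac{\rho^2}{2}\Bigl(1-O\bigl(\tfrac1d+a^2\bigr)\Bigr)-\frac{C}{dr^2},
\]
and for $\rho^2\lesssim\log(1/a)/(dr^2)$ the additive loss $-C/(dr^2)$ overwhelms the gain $\tfrac{\rho^2}{2}\bigl[1-(1+w/\log\tfrac1a)^{-2}\bigr]\asymp\rho^2/\log\tfrac1a$, so the inequality fails there.

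The paper closes this gap with a single use of the mean value theorem: setting $x:=\tan^2(a\rho)$ and $F(x):=\log\lb\bigl(1-r^2-r^2x;\,1+\tfrac{d-m}2,\tfrac m2\bigr)$, one has $\log\PP(R^{(d,m)}_r>\rho)=F(x)-F(0)=x\,F'(\bar x)$ for some $\bar x\in(0,x)$. Now \cref{lem:estimates-incomplete-beta-function} is applied \emph{once}, to the beta function appearing in~$F'(\bar x)$, and the prefactor $x\asymp(a\rho)^2$ forces a~$\rho^2$ in front of every error term. This produces the uniform two-sided bound~\eqref{eq:uniform-radius-estimate}, in particular $\log\PP\le-\tfrac{\rho^2}{2}[1+O(1/(dr^2))]$ with the $O$-term inside the coefficient of~$\rho^2$; both~\eqref{eq:W1-convergence-radius-rayleigh} (splitting at $\eps A$ with $A:=\arccos(r)/a\asymp r\sqrt d$ and $\eps:=2\sqrt{\log A}/A$) and~\eqref{eq:uniform-rayleigh-upper-bound} then follow directly. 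Your argument can be repaired the same way, but as written the crucial $\rho^2$ factor on the beta-ratio error is missing.
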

\begin{proof}
  We observe from~\eqref{eq:covering-radius} that
  \begin{align*}
    \PP(R^{(d,m)}_r>\rho)
    &=\dfrac{\lb\left(1-r^2\cos^{-2}(a\rho);1+\frac{d-m}2,\frac m2\right)}{\lb(1-r^2;1+\frac{d-m}2,\frac m2)}
    \II_{\left\{\rho<\frac{\arccos r}a\right\}}
  \intertext{is a positive and continuously differentiable function of
  $x:=\tan^{-2}(a\rho)=\cos^{-2}(a\rho)-1$
  on $\oo{0}{r^{-2}-1}$.
  Letting $F(x):=\log{\lb(1-r^2-r^2\,x;1+\tfrac{d-m}2,\tfrac m2)}$
  allows us to write $\log{\PP(R^{(d,m)}_r>\rho)}=F(x)-F(0)$,
  so that by the mean value theorem there exists $\bar x\in\oo{0}{x}$ with}
  \log{\PP(R^{(d,m)}_r>\rho)}
  &=xF'(\bar x)\\[.4em]
  &=-r^2\,x\,\frac{{\bigl(1-r^2-r^2\bar x\bigr)}^{\frac{d-m}2}r^{m-2}{(1+\bar x)}^{\frac m2-1}}{\lb(1-r^2-r^2\,\bar x;1+\tfrac{d-m}2,\tfrac m2)}.
  \intertext{Applying \cref{lem:estimates-incomplete-beta-function}, part~1., for the denominator thanks to the assumption~\eqref{eq:decay-vanishing-parameter},
  we get (introducing $r^{-2}-1=da^2$)}
  \log{\PP(R^{(d,m)}_r>\rho)}
  &=-\left(1+\frac{d-m}2\right)\frac{r^2\,x}{1-r^2-r^2\,\bar x}
  \left[1+O\left(\frac{1-r^2-r^2\,\bar x}{dr^2(1+\bar x)}\right)\right]\\[.4em]
  &=-\frac x{2a^2}
  {\left(1-\frac{\bar x}{da^2}\right)}^{\!-1}
  \left[1+O\left(\frac 1{dr^2}\right)\right],
  \end{align*}
  where the error term is uniform in~$x\in\oo{0}{r^{-2}-1}$.
  Recalling $\bar x<x=\tan^2(a\rho)$
  and using the inequality $y<\tan y<y{(1-\frac 4{\pi^2}y^2)}^{-1}$
  (see, e.g.,~\cite{Becker78})
  with $y:=a\rho\in\oo{0}{\arccos r}\subset\oo{0}{\frac\pi2}$,
  we then obtain
  \begin{equation}
    {-{\frac{\rho^2}2{\left[{\left(1-\frac{4a^2\rho^2}{\pi^2}\right)}^{\!2}-\frac{\rho^2}d\right]}^{-1}
  \left[1+O\left(\frac1{dr^2}\right)\right]}}
  \le\log{\PP(R^{(d,m)}_r>\rho)}
  \le-\frac{\rho^2}2\left[1+O\left(\frac1{dr^2}\right)\right],%
  \label{eq:uniform-radius-estimate}
  \end{equation}
  uniformly in $0\le\rho<A$,
  where
  \[A:=\frac{\arccos r}a=r\sqrt d\,\frac{\arccos r}{\sqrt{1-r^2}}
    \asymp r\sqrt d\]
  tends to infinity by~\eqref{eq:decay-vanishing-parameter}.
  Recalling that the tail of the
  Rayleigh variable~$R$ fulfills $\log{\PP(R>\rho)}=-\frac{\rho^2}2$,
  the upper bound in~\eqref{eq:uniform-radius-estimate} entails that
  \begin{equation}
    \frac{\PP(R^{(d,m)}_r>\rho)}{\PP(R>\rho)}\II_{\left\{0\le\rho<A\right\}}
    =O(1).\label{eq:uniform-radius-estimate-1}
  \end{equation}
  Since
  \[{\left(1-\frac{4a^2\rho^2}{\pi^2}\right)}^{\!2}-\frac{\rho^2}d={\left[1+\rho^2\cdot O\left(a^2+\frac1d\right)\right]}^{\!2}
  ={\left[1+\rho^2\cdot O\left(\frac1{dr^2}\right)\right]}^{\!2},\]
  we also obtain from~\eqref{eq:uniform-radius-estimate} that,
  for any positive sequence $\eps:=\eps(d)\to0$,
  \begin{equation}
    \left|\PP(R^{(d,m)}_r>\rho)-\PP(R>\rho)\right|\II_{\left\{0\le\rho<\eps A\right\}}
    =\rho^2\cdot O\left(\eps^2\right).\label{eq:uniform-radius-estimate-2}
  \end{equation}
  Note that both error terms in~\eqref{eq:uniform-radius-estimate-1}
  and~\eqref{eq:uniform-radius-estimate-2} do not depend on~$\rho$.
  We deduce that
  \begin{align*}
    \Wd\left(\log{R^{(d,m)}_r},\log R\right)
    &=\int_\RR\left|\PP(\log R^{(d,m)}_r>x)-\PP(\log R>x)\right|\,\dd x\\[.4em]
    &=\int_0^\infty\left|\PP(R^{(d,m)}_r>\rho)-\PP(R>\rho)\right|\,\frac{\dd\rho}\rho\\[.4em]
    &\le\int_0^{\eps A}\rho^2\cdot O(\eps^2)\,\frac{\dd\rho}\rho
    +\int_{\eps A}^\infty{\ee^{-\frac{\rho^2}2}}\cdot{O(1)}\,\frac{\dd\rho}\rho\\[.4em]
    &=O(\eps^4A^2)+O\left(\frac{\ee^{-\frac12\eps^2 A^2}}{\eps^2A^2}\right).
  \end{align*}
  Choosing $\eps:=2\sqrt{\log A}/A$, we obtain
  \[\Wd\left(\log{R^{(d,m)}_r},\log R\right)=O\left(\frac{\log^2 A}{A^2}\right)
  =O\left(\frac1{dr^2}\log^2(dr^2)\right),\]
  as stated. Finally, for~$w>0$,
  the upper bound in~\eqref{eq:uniform-radius-estimate}
  yields that, uniformly for all~$\rho>0$,
  \begin{align*}
    \frac{\PP(R^{(d,m)}_r>\rho)}{\PP\left(\left[1+\frac w{\log{\frac1a}}\right]R>\rho\right)}
    &\le\exp\left(-\frac{\rho^2}2\left[1-{\left(1+\frac w{\log{\frac1a}}\right)}^{\!-2}+O\left(\frac1{dr^2}\right)\right]\right)\\[.4em]
    &=\exp\left(-\frac{w\rho^2}{\log{\frac1a}}\bigl[1+o(1)\bigr]\right)\\[.4em]
    &\le1,
  \end{align*}
  provided~$d$ is chosen sufficiently large,
  where we used that
  $\frac1{dr^2}\log{\frac1a}=(a^2+\frac1d)\log{\frac1a}\to0$, by~\eqref{eq:decay-vanishing-parameter}.

\noindent
  For the moments of the standard Rayleigh distribution, see, e.g.,~\cite[§~18.3]{Johnson94}.
\end{proof}

\subsection{Application of Janson's covering result.}\label{sec:janson-covering}
In~\cite{Janson86}, Janson showed that the
number of random ``small sets''
needed to cover a fixed ``big set'' converges%
, when properly normalized,
to the Gumbel extreme value distribution as the ``size'' of the small sets
tends to zero.
More precisely, let the big set be a~$\cC^2$, $D$-dimensional compact Riemannian manifold~$M$
with volume measure~$v_M$,
and suppose that the small sets are i.i.d.\ geodesic balls,
that is, they are all of the form
$B_M(x_i,a\rho_i),\,i\ge1,$
where~$a>0$ is a vanishing scale parameter,
and the centers and radii~$(x_i,\rho_i)$ arise as the atoms of
a Poisson point process on~$M\times\oo{0}{\infty}$
with intensity $\La v_M(\dd x)\otimes\PP(R\in\dd\rho)$,
for some positive random variable~$R$
and $\La:=\La(a)\to\infty$ as~$a\to0$.
Then,
denoting by $\Cover(\La,R,a;M)$ the event
\[M=\bigcup_{i\ge1}B_M(x_i,a\rho_i),\]
Janson~\cite[Lemma~8.1]{Janson86} proved that
\begin{equation}
    \lim_{a\to0}\PP\bigl(\Cover(\La,R,a;M)\bigr)
    =\ee^{-\ee^{-\tau}},\tag{$J$}
\label{eq:janson-gumbel}
\end{equation}
under the following two conditions:
\begin{gather}
\EE{R^q}<\infty\quad\text{for some~$q>D$},
\tag{$J_1$}
\label{eq:moment-janson}
\intertext{and}
J(\La,R,a;M):=ba^Dv_M(M)\,\La-\log{\frac1{ba^D}}-D\log{\log{\frac1{ba^D}}}
-\log{\alpha(R)}\:\xrightarrow[a\to0]{}\:\tau\in\RR,\tag{$J_2$}
\label{eq:condition-janson}
\end{gather}
with
\begin{align}
b:=b(R;M)&:=\frac{\pi^{D/2}\EE{R^D}}{\Gamma(1+\frac D2)v_M(M)},\tag{$J_b$}
\label{eq:janson-b}
\intertext{and~\cite[Eq.\ (9.24)]{Janson86}}
\alpha(R)&:=\frac1{D!}{\left(
\frac{\sqrt\pi\,\Gamma(1+\frac D2)}{\Gamma(\frac{D+1}2)}
\right)}^{\!D-1}
\frac{{\left(\EE{R^{D-1}}\right)}^D}{{\left(\EE{R^D}\right)}^{D-1}}.%
\label{eq:janson-alpha}\tag{$J_\alpha$}
\end{align}

In view of \cref{lem:reduction-to-covering-problem}, we would then like to estimate the probability
\begin{equation}
    \PP\left(\sf{d,m}\ge r\right)=
    \PP\left(\Cover(\La^{(d,m)}_r,R^{(d,m)}_r,a;\dS^{m-1})\right),
    \label{eq:covering-probability-rayleigh}
\end{equation}
where the scale parameter $a:=\sqrt{1-r^2}/r\sqrt d$ vanishes as~$d\to\infty$.
Although Janson's original result
is stated only when the distribution of the random radius does
not depend on~$a$, we show in \cref{lem:application-extension-janson} below, under a minor reinforcement of the
assumption given at~\eqref{eq:decay-vanishing-parameter}, that it still holds
when we use the radii~$R^{(d,m)}_r$ instead of their Rayleigh limit~$R$ in \cref{lem:covering-radius-convergence}.
This is done thanks to a slight improvement of Janson's result,
\cref{pro:extension-janson}, which may be of independent interest (see~\hyperref[sec:appendix]{Appendix}).
\begin{lemma}[Application of the extension of Janson's result]\label{lem:application-extension-janson}
  With the notation of~\cref{lem:reduction-to-covering-problem,lem:covering-radius-convergence},
  suppose~\eqref{eq:condition-janson} holds with $\La:=\La^{(d,m)}_r$ and $M:=\dS^{m-1}$
  (i.e., $J\bigl(\La^{(d,m)}_r,R,a;\dS^{m-1}\bigr)\to\tau$),
  and suppose also that
  \begin{equation}
    a+\frac1d\log^{2+\ga}{\frac1a}\:\xrightarrow[d\to\infty]{}\:0
    \label{eq:decay-vanishing-parameter-2}
  \end{equation}
  holds for some $\ga\in\oo{0}{\frac12}$.
  Then
  \[\lim_{d\to\infty}\PP\bigl(\Cover(\La^{(d,m)}_r,R^{(d,m)}_r,a;\dS^{m-1})\bigr)
    =\ee^{-\ee^{-\tau}}\!.\]
\end{lemma}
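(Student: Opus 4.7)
The plan is to invoke the Appendix result \cref{pro:extension-janson}, which extends Janson's covering theorem \eqref{eq:janson-gumbel} to the situation where the law of the random radii is itself allowed to depend on the vanishing scale~$a$. With the identifications $M:=\dS^{m-1}$ (so intrinsic dimension $D=m-1$), intensity $\La:=\La^{(d,m)}_r$, and radii $R^{(d,m)}_r$, \cref{pro:extension-janson} will deliver the stated Gumbel-type limit provided (a)~a uniform-in-$d$ moment bound $\sup_d\EE\bigl[(R^{(d,m)}_r)^q\bigr]<\infty$ holds for some $q>m-1$, and (b)~the Janson functional $J(\La^{(d,m)}_r,R^{(d,m)}_r,a;\dS^{m-1})$, formed with the genuine radii rather than a fixed distribution, still converges to~$\tau$.

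For point~(a), the pointwise domination \eqref{eq:uniform-rayleigh-upper-bound} yields, for any fixed $w>0$ and $d$ large enough, the stochastic ordering of $R^{(d,m)}_r$ by $(1+w/\log(1/a))R$ where $R$ is standard Rayleigh. Since $R$ has moments of every order, this gives $\sup_d\EE\bigl[(R^{(d,m)}_r)^q\bigr]<\infty$ for every $q>0$, in particular for the required $q>m-1$. The same domination, together with the pointwise convergence $\PP(R^{(d,m)}_r>\rho)\to\PP(R>\rho)$ that follows from \eqref{eq:uniform-radius-estimate}, supplies uniform integrability and hence, by dominated convergence, $\EE\bigl[(R^{(d,m)}_r)^k\bigr]\to\EE[R^k]$ for every fixed $k\in\ZZ_+$, with an explicit rate extracted from \eqref{eq:uniform-radius-estimate-2}.

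To transfer the convergence of $J$ from the Rayleigh reference~$R$ to the actual radii $R^{(d,m)}_r$, I would note that the functionals $b(R^{(d,m)}_r;\dS^{m-1})$ and $\alpha(R^{(d,m)}_r)$ defined by \eqref{eq:janson-b}--\eqref{eq:janson-alpha} are smooth expressions in the moments $\EE[(R^{(d,m)}_r)^{m-2}]$ and $\EE[(R^{(d,m)}_r)^{m-1}]$. The logarithmic corrections $-\log(1/(ba^{m-1}))-(m-1)\log\log(1/(ba^{m-1}))-\log\alpha$ are continuous in $b$ and~$\alpha$ and contribute only $o(1)$ perturbations. The delicate term is the leading one $ba^{m-1}v_{\dS^{m-1}}(\dS^{m-1})\La^{(d,m)}_r$, which is of order $\log(1/a)$: ensuring that replacing $b(R;\dS^{m-1})$ by $b(R^{(d,m)}_r;\dS^{m-1})$ produces only an $o(1)$ additive error imposes a relative error on the moment of order $m-1$ of size $o(1/\log(1/a))$.

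This quantitative moment control is the principal obstacle. An optimized split of $\int_0^\infty\rho^{k-1}\bigl|\PP(R^{(d,m)}_r>\rho)-\PP(R>\rho)\bigr|\,\dd\rho$ along $\rho<\eps A$ and $\rho\ge\eps A$, where $A:=(\arccos r)/a\asymp r\sqrt d$, using \eqref{eq:uniform-radius-estimate-2} in the core and the Gaussian tail afforded by the uniform domination beyond, yields moment errors of order $\mathrm{poly}(\log A)/A^k$. Combined with the reinforced decay assumption \eqref{eq:decay-vanishing-parameter-2}, which forces $\log^{2+\ga}(1/a)\ll d$, one verifies that $\log(1/a)\cdot\mathrm{poly}(\log A)/A^{m-1}\to0$, precisely closing the estimate. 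Hence $J(\La^{(d,m)}_r,R^{(d,m)}_r,a;\dS^{m-1})\to\tau$, and \cref{pro:extension-janson} yields the stated convergence of the covering probability to $\ee^{-\ee^{-\tau}}$.
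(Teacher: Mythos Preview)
You have misread the hypotheses of \cref{pro:extension-janson}. That proposition does \emph{not} ask for a uniform moment bound on~$R_a$, nor does it ask that the Janson functional $J(\La,R_a,a;M)$ formed with the $a$-dependent radii converge to~$\tau$. Its hypotheses are: (i)~\eqref{eq:moment-janson} and~\eqref{eq:condition-janson} for the \emph{fixed} limit variable~$R$ (here standard Rayleigh), (ii)~the stochastic-domination bound~\eqref{eq:uniform-radius-upper-bound}, and (iii)~the Wasserstein rate~\eqref{eq:W1-radius-convergence}. Your conditions~(a) and~(b) are neither what the proposition requires nor sufficient on their own: even if you established $J(\La^{(d,m)}_r,R^{(d,m)}_r,a;\dS^{m-1})\to\tau$, Janson's original theorem does not apply to $a$-dependent radii, so you would still need a transfer device like \cref{pro:extension-janson} --- and then you would be back to checking its actual hypotheses.

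The paper's proof is accordingly much shorter. Condition~\eqref{eq:moment-janson} holds trivially for the Rayleigh law; condition~\eqref{eq:condition-janson} with the fixed~$R$ is assumed in the lemma's statement; condition~\eqref{eq:uniform-radius-upper-bound} is exactly~\eqref{eq:uniform-rayleigh-upper-bound} from \cref{lem:covering-radius-convergence}. The only thing left is to check that the bound~\eqref{eq:W1-convergence-radius-rayleigh}, namely $\Wd(\log R^{(d,m)}_r,\log R)=O\bigl(\frac1{dr^2}\log^2(dr^2)\bigr)$, is $o\bigl(1/\log^2\frac1a\bigr)$. Using $\frac1{dr^2}=\frac1d+a^2$ and the reinforced decay~\eqref{eq:decay-vanishing-parameter-2}, one verifies
\[
\frac1{dr^2}\log^2(dr^2)\,\log^2\frac1a
=\Bigl[\tfrac1{dr^2}\log^{\frac{4+2\ga}{\ga}}(dr^2)\Bigr]^{\frac\ga{2+\ga}}
\Bigl[\tfrac1d\log^{2+\ga}\tfrac1a+a^2\log^{2+\ga}\tfrac1a\Bigr]^{\frac2{2+\ga}}
\xrightarrow[d\to\infty]{}0,
\]
which is precisely where the exponent~$2+\ga$ in~\eqref{eq:decay-vanishing-parameter-2} is used. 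Your elaborate moment-splitting argument is therefore unnecessary.
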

\begin{proof}
  We apply \cref{pro:extension-janson} in~\hyperref[sec:appendix]{Appendix}.
  We have~\eqref{eq:condition-janson} for $\La:=\La^{(d,m)}$ and $D:=m-1$,
  and also~\eqref{eq:moment-janson}
  because all moments of the Rayleigh distribution are finite.
  It remains to show that the two conditions of \cref{pro:extension-janson}
  related to~$R_a:=R^{(d,m)}_r$ are satisfied.
  The first one is~\eqref{eq:uniform-rayleigh-upper-bound}
  given by \cref{lem:covering-radius-convergence}.
  For the second one,~\eqref{eq:W1-convergence-radius-rayleigh} gives
  \begin{equation}
    \Wd\left(\log{R^{(d,m)}_r},\log R\right)=O\left(\frac1{dr^2}\log^2(dr^2)\right)
    =o\left(\frac1{\log^2{\frac1a}}\right),
  \end{equation}
  since, by~\eqref{eq:decay-vanishing-parameter-2} and the relation
  $\frac1{dr^2}=\frac1d+a^2$,
  \[\frac1{dr^2}\log^2(dr^2)\log^2{\frac1a}
  ={\left[\left(\frac1{dr^2}\right)\log^{\frac{4+2\ga}\ga}(dr^2)\right]}^{\!\frac\ga{2+\ga}}
  \cdot{\left[\frac1d\log^{2+\ga}{\frac1a}
  +a^2\log^{2+\ga}\frac1a\right]}^{\!\frac2{2+\ga}}
  \:\xrightarrow[d\to\infty]{}\:0.\qedhere\]
\end{proof}
The applicability of \cref{lem:application-extension-janson}
is done in \cref{lem:condition_janson} below.
Plugging in the expressions of~$\EE{R^k}$ given by ~\eqref{eq:moments-rayleigh}
into the expression~\eqref{eq:janson-alpha} on page~\pageref{eq:janson-alpha},
we can see that~$\alpha(R)$ simplifies to
\begin{align}
    \alpha:=\alpha(R)&=\frac{\pi^{\frac{m-1}2}\Gamma(\frac{m+1}2)}{(m-1)!}%
    =\frac{\pi^{\frac m2}}{2^{m-1}\,\Gamma(\frac m2)},%
    \label{eq:rayleigh-alpha}
    \intertext{by an application of Legendre's duplication formula.
    Similarly, using also the expression of~$\ka_m$ in~\eqref{eq:volume-ball}
    and the fundamental property $\Gamma(1+\frac m2)=\frac m2\,\Gamma(\frac m2)$,
    the expression of~$b(R;M)$ in~\eqref{eq:janson-b} reduces to}
    b:=b(R;\dS^{m-1})&=\frac{{(\sqrt{2\pi})}^{m-1}\,\Gamma(\frac m2)}{2\pi^{\frac m2}}
    =\frac{2^{\frac{m-3}2}\,\Gamma(\frac m2)}{\sqrt\pi}.\label{eq:rayleigh-b}
\end{align}
\begin{lemma}[Verification of Janson's condition~\eqref{eq:condition-janson}
and of~\eqref{eq:decay-vanishing-parameter-2}]\label{lem:condition_janson}
  Let~$m\ge2$ be a fixed integer,
  and suppose that one of the three assumptions~\eqref{hyp:subcritical},~\eqref{hyp:critical}, or~\eqref{hyp:supercritical} below occurs:
  \begin{gather}
      \log d\ll\log{\la\ka_d}\ll d,%
      \tag{$\rA_{\textrm{sub}}$}\label{hyp:subcritical}\\[.4em]
      \log{\la\ka_d}\sim dx\ \text{with}\ x\in\oo{0}{\infty},%
      \tag{$\rA_{\textrm{crit}}$}\label{hyp:critical}\\[.4em]
      d\ll\log{\la\ka_d}\ll d^{\frac32-\ga}\text{ for some $\ga\in\oo[scaled]{0}{\frac12}$}.\tag{$\rA_{\textrm{sup}}$}\label{hyp:supercritical}
  \end{gather}
  Let~$\alpha$ and~$b$ as in~\eqref{eq:rayleigh-alpha}
  and~\eqref{eq:rayleigh-b}.
  Then for every~$\tau\in\RR$, there exists $r:=r(d;\tau)>0$
  such that, for $a:=a(d;\tau)$ and~$\La^{(d,m)}_r$ as in~\eqref{eq:vanishing-parameter-a}
  and~\eqref{eq:covering_intensity}, condition~\eqref{eq:condition-janson} holds:
  \begin{equation}
    J(\La^{(d,m)}_r,R,a;\dS^{m-1})=
      ba^{m-1}\,m\ka_m\La^{(d,m)}_r
      +\log\left(ba^{m-1}\right)
      -(m-1)\log\left[-{\log\left(ba^{m-1}\right)}\right]
      -\log\alpha
      \to\tau.%
      \label{eq:condition-janson-rayleigh}
  \end{equation}
  Furthermore,~\eqref{eq:decay-vanishing-parameter-2} holds: $a+\frac1d\log^{2+\ga}{\frac1a}\to0$ for some $\ga\in\oo{0}{\frac12}$.
\end{lemma}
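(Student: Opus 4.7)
The plan is to \emph{define} $r = r(d;\tau)$ implicitly from the Janson condition \eqref{eq:condition-janson-rayleigh} and verify the decay requirement \eqref{eq:decay-vanishing-parameter-2} afterwards.

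\textbf{Step~1 (Simplification of the main term).} I would first apply \cref{lem:estimates-incomplete-beta-function} (part~2) to the incomplete beta ratio appearing in~$\La^{(d,m)}_r$, which is justified as soon as $dr^2\gg1$, to be verified a posteriori. Combining with the identities $m\ka_m = 2\pi^{m/2}/\Gamma(m/2)$ and $a^{m-1} = (1-r^2)^{(m-1)/2}r^{-(m-1)}d^{-(m-1)/2}$, and with the explicit expression \eqref{eq:rayleigh-b} for $b$, a direct computation collapses the main term of \eqref{eq:condition-janson-rayleigh} to
\[
    ba^{m-1}\,m\ka_m\,\La^{(d,m)}_r \;=\; \frac{\sqrt2\,\pi^{(m-1)/2}}{\Gamma(m/2)}\cdot\frac{\la\ka_d(1-r^2)^{(d+1)/2}}{r\sqrt d}\,\bigl[1+o(1)\bigr].
\]
Setting $L := -\log(ba^{m-1}) = (m-1)\log(1/a) + O(1)$, condition \eqref{eq:condition-janson-rayleigh} rewrites as an equation of the form
\[
    \frac{\sqrt2\,\pi^{(m-1)/2}}{\Gamma(m/2)}\cdot\frac{\la\ka_d(1-r^2)^{(d+1)/2}}{r\sqrt d} \;=\; L + (m-1)\log L + \log\alpha + \tau + o(1).
\]

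\textbf{Step~2 (Implicit definition of $r$).} For fixed large $d$, the left-hand side is a strictly decreasing bijection $\oo{0}{1}\to\oo{0}{\infty}$ of $r$, while the right-hand side, viewed as a function of $r$ through $a = \sqrt{1-r^2}/(r\sqrt d)$, is continuous and strictly increasing on the range where $L>0$ (since $\tfrac{\dd}{\dd L}[L+(m-1)\log L] = 1+(m-1)/L > 0$), going from $-\infty$ near its smallest admissible $r$ to $+\infty$ as $r\to1^-$. By monotonicity and the intermediate value theorem, there is a unique solution $r=r(d;\tau)\in\oo{0}{1}$, which I take as the definition of~$r$.

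\textbf{Step~3 (Asymptotic behavior in each regime).} Next I would verify, directly from the implicit equation (and consistently with \cref{thm:regimes}), that
\[
    \left\{\begin{aligned}
        r^2 &\sim \tfrac 2d\log\la\ka_d, &&\text{in the subcritical regime},\\[.2em]
        r^2 &\to 1-\ee^{-2x},            &&\text{in the critical regime},\\[.2em]
        1-r^2 &\sim (\la\ka_d)^{-2/(d+1)}, &&\text{in the supercritical regime}.
    \end{aligned}\right.
\]
In particular, $a\to0$ and $dr^2\to\infty$ in all three cases, which retrospectively justifies the application of \cref{lem:estimates-incomplete-beta-function} in Step~1.

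\textbf{Step~4 (Decay condition, the main obstacle).} It remains to check \eqref{eq:decay-vanishing-parameter-2}: $a+\tfrac1d\log^{2+\ga}(1/a)\to0$ for some $\ga\in\oo{0}{1/2}$. From
\[
    \log(1/a) \;=\; \tfrac12\log\bigl(dr^2/(1-r^2)\bigr) \;=\; \tfrac12\log d + \tfrac1{d+1}\log\la\ka_d + O(1),
\]
in the subcritical and critical regimes $\log(1/a)=O(\log d)$, and the condition holds for any $\ga>0$. In the supercritical regime,
\[
    \frac1d\log^{2+\ga}(1/a) \;\lesssim\; \frac{(\log d)^{2+\ga}}{d} + \frac{(\log\la\ka_d)^{2+\ga}}{d^{3+\ga}},
\]
and the second summand vanishes precisely when $\log\la\ka_d\ll d^{(3+\ga)/(2+\ga)}$. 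A short elementary calculation shows that any $\ga\in\oo[scaled]{0}{4\ga_0/(1-2\ga_0)}\cap\oo{0}{1/2}$ satisfies $(3+\ga)/(2+\ga) \ge 3/2-\ga_0$, so that the assumption $\log\la\ka_d\ll d^{3/2-\ga_0}$ in \eqref{hyp:supercritical} is exactly what the decay condition demands. This matching between Janson's structural constraint and the admissible growth of $\la\ka_d$ in the supercritical regime is the delicate point of the lemma; the rest is essentially bookkeeping around the implicit equation defining~$r$.
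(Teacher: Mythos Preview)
Your approach is essentially the paper's: simplify the leading term via the incomplete-beta estimate, define $r$ (equivalently $a$) implicitly from the resulting equation, read off the asymptotics of $a$ in each regime, and then check the decay condition \eqref{eq:decay-vanishing-parameter-2}. One small slip: the displayed identity $\log(1/a)=\tfrac12\log d+\tfrac1{d+1}\log\la\ka_d+O(1)$ is only valid in the critical and supercritical regimes---in the subcritical regime $r\to0$, so the contribution $\log r$ is not $O(1)$ and in fact $\log(1/a)\sim\tfrac12\log(2\log\la\ka_d)$, which can be far smaller than $\tfrac12\log d$---but this is harmless, since the only conclusion you need there is $\log(1/a)=O(\log d)$, and that still holds.
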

\begin{proof}
    According to~\eqref{eq:vanishing-parameter-a},
    we seek $r:={(1+da^2)}^{-\frac12}\in\oo{0}{1}$.
    We start with the following asymptotics of~\eqref{eq:covering_intensity}, obtained by \cref{lem:estimates-incomplete-beta-function}, part~2.,
    \begin{align}
        \La^{(d,m)}_r
        &=\la\ka_d\,
            \frac{{(1-r^2)}^{1+\frac{d-m}2}r^{m-2}{(\frac d2)}^{\frac m2-1}}{\Gamma(\frac m2)}
            \left[1+O\left(\frac1{dr^2}\right)\right]\notag\\[.4em]
        &=\la\ka_d\,d^{\frac d2}\,\frac{a^{d+2-m}{(1+da^2)}^{-\frac d2}}{2^{\frac m2-1}\,\Gamma(\frac m2)}
        \left[1+O\left(\frac1d+a^2\right)\right],\notag\end{align}
    provided that $\frac1{dr^2}=\frac1d+a^2\ll1$.
    In this case,
    \begin{align}
        ba^{m-1}\,m\ka_m\,\La^{(d,m)}_r
        &=\frac{\sqrt2\,\pi^{\frac{m-1}2}}{\Gamma(\frac m2)}\,\la\ka_d\,\frac a{{\left(1+\frac1{da^2}\right)}^{d/2}}
        \left[1+O\left(\frac1d+a^2\right)\right],\label{eq:condition-janson-1st-term}
    \intertext{Next,}
        \log\left(ba^{m-1}\right)
        &=-(m-1)\log{\frac1a}+\log b,\label{eq:condition-janson-2nd-term}
    \intertext{%
    and}
        {-{(m-1)\log\left[-{\log\left(ba^{m-1}\right)}\right]}}
        &=-{(m-1)\log{\log{\frac1a}}}
        -(m-1)\log(m-1)+o(1).\label{eq:condition-janson-3rd-term}
    \end{align}
    Adding~\eqref{eq:condition-janson-1st-term},~\eqref{eq:condition-janson-2nd-term},~\eqref{eq:condition-janson-3rd-term}
    and~$-{\log\alpha}$
    yields
    \begin{align}
    &ba^{m-1}\,m\ka_m\La^{(d,m)}_r
    +\log\left(ba^{m-1}\right)
    -(m-1)\log\left[-{\log\left(ba^{m-1}\right)}\right]
    -\log\alpha
    \label{eq:condition-janson-lhs}\\[.4em]
    &\quad=
    \frac{\sqrt2\,\pi^{\frac{m-1}2}}{\Gamma(\frac m2)}\,\la\ka_d\,\frac a{{\left(1+\frac1{da^2}\right)}^{d/2}}
    \left[1+O\left(\frac1d+a^2\right)\right]-(m-1)\log{\frac1a}-{(m-1)\log{\log{\frac 1a}}}
    -\log{B_m}+o(1),\label{eq:condition-janson-rhs}
    \end{align}
    where
    \begin{equation}
        B_m:=\frac{\alpha\,{(m-1)}^{m-1}}b
        =\frac{\pi^{\frac{m+1}2}{(m-1)}^{m-1}}{2^{\frac{3m-5}2}{\Gamma(\frac m2)}^2}\label{eq:constant-Bm}
    \end{equation}
    follows from the expressions of~$\alpha$ in~\eqref{eq:rayleigh-alpha}
    and~$b$ in~\eqref{eq:rayleigh-b}.
    We now observe by monotonicity and continuity
    in the variable~$a\in\oo{0}{1}$ that the equation
    \begin{equation}
      \frac{\sqrt2\,\pi^{\frac{m-1}2}}{\Gamma(\frac m2)}\,\la\ka_d\,\frac a{{\left(1+\frac1{da^2}\right)}^{d/2}}
     =(m-1)\log{\frac1a}+{(m-1)\log{\log{\frac 1a}}}
     +\log{B_m}+\tau\label{eq:implicit-definition-vanishing-parameter-a}
    \end{equation}
    has a unique solution $a:=a(d;\tau)$. Moreover, comparing both sides
    of this equation, we observe that
    any accumulation point of~$a(d;\tau)$ must equal~$0$
    since~$\la\ka_d\to\infty$. Consequently, $a(d;\tau)\to0$ and we henceforth make this choice for the parameter~$a$ (where, for sake of clarity, we omit the dependency
    in~$d$ and~$\tau$).
    If we plug in~\eqref{eq:implicit-definition-vanishing-parameter-a} into~\eqref{eq:condition-janson-rhs}, we obtain that~\eqref{eq:condition-janson-lhs}
    reduces to
    \begin{equation}
    ba^{m-1}\,m\ka_m\La^{(d,m)}_r
    +\log\left(ba^{m-1}\right)
    -(m-1)\log\left[-{\log\left(ba^{m-1}\right)}\right]-\log\alpha=\tau+O\left(\left[\frac1d+a^2\right]\cdot\log{\frac1a}\right),%
    \label{eq:condition-janson-rhs-2}
    \end{equation}
    Note that, in order to get~\eqref{eq:condition-janson-rayleigh}, it is enough to justify that the last error term in the right-hand side of~\eqref{eq:condition-janson-rhs-2} goes to zero. Actually, that last term will be negligible as soon as~\eqref{eq:decay-vanishing-parameter}
    and a fortiori~\eqref{eq:decay-vanishing-parameter-2} is satisfied. The remainder of the proof is then devoted to asymptotically relate~%
$\frac1a$ and $\log{\la\ka_d}$
    in order to check that~\eqref{eq:decay-vanishing-parameter-2} holds in all three regimes.
    
    To begin with, passing to the
    logarithm in~\eqref{eq:implicit-definition-vanishing-parameter-a} easily yields
    \begin{equation*}
      \frac d2\log\left(1+\frac1{da^2}\right)
      =\log{\frac{A_m\,\la\ka_d}{\frac1a\log{\frac1a}}}
      -\frac{(m-1)\log{\log{\frac1a}}+\log{B_m}+\tau}{(m-1)\log{\frac1a}}
      +o\left(\frac{\log{\log{\frac1a}}}{\log{\frac1a}}\right),
    \end{equation*}
    where
    \begin{equation}
      A_m:=\frac{\sqrt2\,\pi^{\frac{m-1}2}}{(m-1)\Gamma(\frac m2)}.\label{eq:constant-Am}
    \end{equation}
    Dropping the~$O(1)$ terms and dividing both sides by~$d$, we have in particular
    \begin{equation}
        \frac1d\log{\frac1a}
        +\frac1d\log{\log{\frac1a}}
        +O\left(\frac1d\right)
        =\frac1d\log{\la\ka_d}-\frac12\log\left(1+\frac1{da^2}\right).\label{eq:asymptotic-relation-a-Lambda}
    \end{equation}
    We start with the subcritical case and assume that~\eqref{hyp:subcritical} holds.
    Since~$\log{\la\ka_d}\ll d$, the left-hand side of~\eqref{eq:asymptotic-relation-a-Lambda} is equivalent to
    $\frac1d\log{\frac1a}$ (because~$a\to0$), while the right-hand side is bounded from above by $\frac1d\log{\la\ka_d}$. This entails that
    $\frac1d\log{\frac1a}\to0$,
    and then, plugging this back into~\eqref{eq:asymptotic-relation-a-Lambda}, $\log(1+\frac1{da^2})\to0$.
    Thus~$da^2\to\infty$, so $\log{\frac1a}=O(\log d)$,
    and~\eqref{eq:asymptotic-relation-a-Lambda}
    becomes
    \[\frac1d\log{\la\ka_d}-\frac1{2da^2}\bigl(1+o(1)\bigr)
    =O\left(\frac{\log d}d\right).\]
    Because $\log d\ll\log{\la\ka_d}$, this implies
    \begin{align*}
      &&\frac1a&\sim\sqrt{2\log{\la\ka_d}}
        &&\text{(subcritical regime)},
    \intertext{which results in $\frac1d\log^2{\frac1a}\to0$
    (so~\eqref{eq:decay-vanishing-parameter-2} holds for any $\ga\in\oo{0}{\frac12}$).}
    \intertext{%
    Next, for the critical case, let us now assume~\eqref{hyp:critical}, i.e.,
    $\frac1d\log{\la\ka_d}\to x\in\oo{0}{\infty}$.
    Then~\eqref{eq:asymptotic-relation-a-Lambda}
    implies that $\log{\frac1a}=O(d)$,
    which in turn implies that $\log\bigl(1+\frac1{da^2}\bigr)=O(1)$,
    i.e., $a\gtrsim\frac1{\sqrt d}$. Hence, by~\eqref{eq:asymptotic-relation-a-Lambda} again,
    $\frac1d\log{\la\ka_d}-\frac12\log\bigl(1+\frac1{da^2}\bigr)\to0$,
    or in other words,}
    &&\frac1a&\sim\sqrt{d\left(\ee^{2x}-1\right)}
    &&\text{(critical regime)},
    \intertext{which is stronger than~\eqref{eq:decay-vanishing-parameter-2}.}
    \intertext{Lastly, we consider the supercritical case and assume~\eqref{hyp:supercritical}. Then the inequality $\frac1d\log{\frac1a}\le\frac12\log(1+\frac1{da^2})$ (which follows from the concavity of the logarithm) and~\eqref{eq:asymptotic-relation-a-Lambda} give
    \[\frac1d\log{\la\ka_d}=O\left(\log\Bigl(1+\frac1{da^2}\Bigr)\right).\]
    Because $\log{\la\ka_d}\gg d$, this forces~$da^2\to0$, so~\eqref{eq:asymptotic-relation-a-Lambda} produces
    \[\bigl(1+o(1)\bigr)\log{\frac1a}
    =\log\left({(\la\ka_d)}^{\frac1d}\sqrt d\right)
    +o(1),\]
    that is}
&&\frac1a&\sim{(\la\ka_d)}^{\frac1d}\sqrt d
    &&\text{(supercritical regime)}.
    \end{align*}
    This will imply $\frac1d\log^{2+\ga}{\frac1a}\to0$ and~\eqref{eq:decay-vanishing-parameter-2} (for some $\ga\in\oo{0}{\frac12}$)
    if we further assume that $\log{\la\ka_d}\ll d^{\frac32-\ga}$,
    as stated in~\eqref{hyp:supercritical}.

    Finally, we can see that the error term
    in~\eqref{eq:condition-janson-rhs-2} goes to zero because~\eqref{eq:decay-vanishing-parameter-2}
    and a fortiori~\eqref{eq:decay-vanishing-parameter} hold under each of
    the three stated conditions~\eqref{hyp:subcritical},~\eqref{hyp:critical} and~\eqref{hyp:supercritical}.
\end{proof}
\begin{scholium}[Summary of useful asymptotics derived in the proof of \cref*{lem:condition_janson}]
  The sequence~$a$ fulfills the implicit asymptotic equality
  \begin{equation}
    \frac d2\log\left(1+\frac1{da^2}\right)
    =\log{\frac{A_m\,\la\ka_d}{\frac1a\log{\frac1a}}}
    -\frac{(m-1)\log{\log{\frac1a}}+\log{B_m}+\tau}{(m-1)\log{\frac1a}}
    +o\left(\frac1{\log{\frac1a}}\right),\label{eq:asymptotics-vanishing-parameter}
  \end{equation}
  where we record
  \begin{equation}
      A_m:=\frac{\sqrt2\,\pi^{\frac{m-1}2}}{(m-1)\Gamma(\frac m2)},\qquad
      B_m:=\frac{\pi^{\frac{m+1}2}{(m-1)}^{m-1}}{2^{\frac{3m-5}2}{\Gamma(\frac m2)}^2},\label{eq:constants-Am-and-Bm}
  \end{equation}
  from~\eqref{eq:constant-Am} and~\eqref{eq:constant-Bm}.
  Furthermore we have, in all regimes,
  \begin{equation}
      \frac1a\sim\sqrt{d\left({(\la\ka_d)}^{\frac2d}-1\right)}
      \sim\begin{cases}
        \sqrt{2\log{\la\ka_d}},&\text{under~\eqref{hyp:subcritical}},\\[.4em]
        \sqrt{d(\ee^{2x}-1)},&\text{under~\eqref{hyp:critical}},\\[.4em]
        {(\la\ka_d)}^{\frac1d}\sqrt d,&\text{under~\eqref{hyp:supercritical}}.
      \end{cases}
  \label{eq:asymptotics-inverse-vanishing-parameter}
  \end{equation}
\end{scholium}

We are now ready to prove \cref{thm:main,cor:regimes-multidirectional}.
\begin{proof}[Proof of \cref*{thm:main}]
    For~$\tau\in\RR$, $a:=a(d;\tau)$,
    and~$r:={(1+da^2)}^{-\frac12}$ as in
    \cref{lem:condition_janson},
    the conditions to apply \cref{lem:application-extension-janson}
    are in place, hence (recalling~\eqref{eq:covering-probability-rayleigh})
    \[\lim_{d\to\infty}\PP(\sf{d,m}\ge r)
    =\ee^{-\ee^{-\tau}}\!.\]
    Now, notice that
    \begin{align*}
        \PP(\sf{d,m}\ge r)
        &=\PP\left(\sf{d,m}\ge{(1+da^2)}^{-\frac12}\right)\\[.4em]
        &=\PP\left[d\log{\frac1{\sqrt{1-{\bigl(\sf{d,m}\bigr)}^2}}}
        \ge\frac d2\log\left(1+\frac1{da^2}\right)\right].
    \intertext{Plugging in~\eqref{eq:asymptotics-vanishing-parameter}, we find}
        \PP(\sf{d,m}\ge r)
        &=\PP\Biggl[
            \log{\frac{A_m\,\la\ka_d}{\frac1a\log{\frac1a}}}
            -d\log{\frac1{\sqrt{1-{\bigl(\sf{d,m}\bigr)}^2}}}
            \le\frac{(m-1)\log{\log{\frac1a}}+\log{B_m}+\tau}{(m-1)\log{\frac1a}}+o\left(\frac1{\log{\frac1a}}\right)
        \Biggr].
    \end{align*}
    To conclude, it remains to express $\log{\frac1a}$ and
    $\log{\log{\frac1a}}$ in terms of $\mathfrak s(d)=\log{\sqrt{d\left({(\la\ka_d)}^{\frac2d}-1\right)}}$. According to~\eqref{eq:asymptotics-inverse-vanishing-parameter},
    \[
        \log{\frac1a}=\mathfrak s(d)+o(1),\qquad\text{and also}\quad
        \log{\log{\frac1a}}=\log{\mathfrak s(d)}+o(1).
    \]
    Hence
    \[
    (m-1)d\,\mathfrak s(d)
    \left[\frac1d\log{\frac{A_m\,\la\ka_d}{\ee^{\mathfrak s(d)}\mathfrak s(d)}}
        -\log{\frac1{\sqrt{1-{\bigl(\sf{d,m}\bigr)}^2}}}\right]
    -(m-1)\log{\mathfrak s(d)}-\log{B_m}
    \]
    converges in law to the standard Gumbel distribution.
    This establishes \cref{thm:main} with
    \begin{align}
      \mathfrak a(d;m)&:=(m-1)\mathfrak s(d)
      \log{\frac{A_m\,\la\ka_d}{\mathfrak s(d)}}
      -(m-1){\mathfrak s(d)}^2
      -(m-1)\log{\mathfrak s(d)}-\log{B_m},\label{eq:a-multidirectional}
      \intertext{and}
      \mathfrak b(d;m)
      &:=(m-1)d\,\mathfrak s(d),\label{eq:b-multidirectional}
      \end{align}
    where we recall that~$A_m$ and~$B_m$ have been defined at~\eqref{eq:constants-Am-and-Bm}.
\end{proof}
\begin{proof}[Proof of \cref*{cor:regimes-multidirectional}]
  We get from \cref{thm:main} that
  \[\mathfrak a(d;m)-\mathfrak b(d;m)\log{\frac1{\sqrt{1-{\bigl(\sf{d,m}\bigr)}^2}}=O_\dP(1)},\]
  where $X(d)=O_{\dP}(1)$ means that
  $\lim_{A\to\infty}\limsup_{d\to\infty}\PP(|X(d)|>A)=0$.
  Multiplying by $2{\mathfrak b(d;m)}^{-1}$ and taking the exponential function,
  it follows that
  \[\ee^{\frac{2\mathfrak a(d;m)}{\mathfrak b(d;m)}}\left(1-{\bigl(\sf{d,m}\bigr)}^2\right)
  =1+\frac1{\mathfrak b(d;m)}\cdot O_\dP(1),\]
  where $\ee^{\frac{2\mathfrak a(d;m)}{\mathfrak b(d;m)}}\sim{(\la\ka_d)}^{\frac2d}$
  from~\eqref{eq:a-multidirectional} and~\eqref{eq:b-multidirectional}.
  Since $\log{\la\ka_d}\ll d^2$, this is also equivalent to ${(\la\ka_d)}^{\frac2{d+1}}$.
  We finish the proof similarly to that of \cref{thm:regimes}.
\end{proof}

\section{Consequences on the radius-vector function}\label{sec:radius-vector-function}
To conclude this work, we prove an easy
consequence on the asymptotics of the radius-vector function~$\rvf$ given at~\eqref{eq:radius-vector-function}.
Recall that~$\rvf\le\sf$, see \cref{fig:polytope}.

\begin{corollary}[Subcritical and supercritical regimes for~$\rvf$]\label{cor:radius-vector-function} Let~\cref{eq:assumption-origin} hold.
    \begin{enumerate}[label=(\roman*)]%
        \item In the subcritical regime, under $\log{\la\ka_d}\ll d$,
        \[\limsup_{d\to\infty}\sqrt\frac d{2\log{\la\ka_d}}\,\rvf\le1,\quad\text{in probability}.\]
        \item In the supercritical regime, under the condition $\log{\la\ka_d}\gg d$
        \[\lim_{d\to\infty}(\la\ka_d)^{\frac2{d+1}}\Bigl(1-\rvf\Bigr)=\frac12,\quad\text{in probability}.\]
    \end{enumerate}
\end{corollary}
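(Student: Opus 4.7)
For part~(i), the inequality $\rvf\le\sf$ combined with the subcritical case of \cref{thm:regimes} immediately yields
\[
\sqrt{\tfrac d{2\log{\la\ka_d}}}\,\rvf\le\sqrt{\tfrac d{2\log{\la\ka_d}}}\,\sf,
\]
whose right-hand side tends to $1$ in probability; this proves $\limsup\sqrt{d/(2\log{\la\ka_d})}\,\rvf\le1$. For part~(ii), the same inequality together with the supercritical case of \cref{thm:regimes} provides the ``easy'' direction
\[
{(\la\ka_d)}^{\frac2{d+1}}\bigl(1-\rvf\bigr)\ge{(\la\ka_d)}^{\frac2{d+1}}\bigl(1-\sf\bigr)=\tfrac12+o_\dP(1),
\]
so all the content lies in the matching upper bound ${(\la\ka_d)}^{\frac2{d+1}}(1-\rvf)\le\tfrac12+o_\dP(1)$, i.e.\ showing that $\rvf$ really is close to $1$ at the same rate as~$\sf$.

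For this matching bound I would appeal to Wendel's probability calculation (as in \cref{lem:origin}) applied inside a spherical cap near~$u$. Fix $r\in\oo{0}{1}$ close to~$1$, let~$N$ denote the Poisson number of points of~$\cP^d_\la$ lying in $\cC^d(r;u)$, and condition on~$N$: the points are then i.i.d.\ uniform in~$\cC^d(r;u)$, and by rotational invariance about the $u$-axis their orthogonal projections onto $u^\perp\cong\RR^{d-1}$ are i.i.d.\ symmetric. Wendel's formula in dimension $d-1$ then gives that the origin of~$u^\perp$ lies in the convex hull of these projections with conditional probability $\PP(S_{N-1}\ge d-1)$, for $S_{N-1}\sim\mathrm{Binomial}(N-1,\tfrac12)$. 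On that event, a convex combination $\sum_i\alpha_iX_i'=0$ in~$u^\perp$ lifts to $\sum_i\alpha_iX_i=tu$ with $t=\sum_i\alpha_i\langle u,X_i\rangle\ge r$ (every cap point has $u$-coordinate at least~$r$), so $\rvf(u)\ge r$. By the Poisson law of large numbers, the mean condition $\la\lvert\cC^d(r;u)\rvert\ge(2+\eps)d$ is thus enough to ensure $\rvf\ge r$ with high probability.

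Tuning~$r$ is then routine. The asymptotics $\lvert\cC^d(r;u)\rvert\sim\ka_d{(1-r^2)}^{\frac{d+1}2}/(r\sqrt{2\pi d})$, obtained from~\eqref{eq:rapport-kappa} and \cref{lem:estimates-incomplete-beta-function}(part~1.), turn the mean condition into ${(1-r^2)}^{\frac{d+1}2}\gtrsim d^{3/2}/\la\ka_d$. In the supercritical regime $\log{\la\ka_d}\gg d$, this is satisfied for~$d$ large by $1-r^2=(1+\delta){(\la\ka_d)}^{-\frac2{d+1}}$ for any fixed $\delta>0$, since $d^{3/(d+1)}\to1$. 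Taylor expanding $r=\sqrt{1-(1+\delta){(\la\ka_d)}^{-\frac2{d+1}}}$ yields $1-\rvf\le\tfrac{1+\delta}2{(\la\ka_d)}^{-\frac2{d+1}}(1+o(1))$ with high probability, and letting $\delta\downarrow 0$ delivers $\limsup\le\tfrac12$. The main point requiring care is the symmetrization step: checking that rotational invariance about the $u$-axis does produce the symmetry about~$0$ in~$u^\perp$ needed for Wendel, and that the projected convex combination truly lifts to a point $tu\in K^d_\la$ with $t\ge r$; both facts follow from an elementary rewriting of the same combination in~$\RR^d$, but are structural to the argument.
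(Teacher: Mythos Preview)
Your proposal is correct and takes essentially the same approach as the paper: both derive~(i) and the lower bound in~(ii) from $\rvf\le\sf$ together with \cref{thm:regimes}, and for the matching upper bound both project the cap points $\cP^d_\la\cap\cC^d(r;u)$ onto a hyperplane orthogonal to~$u$, apply Wendel's formula in dimension~$d-1$, and verify that the expected number of cap points grows much faster than~$d$ once $1-r$ is a $(\tfrac12+\eps)$-multiple of $(\la\ka_d)^{-2/(d+1)}$. The only differences are cosmetic (the paper parametrizes via $1-r$ rather than $1-r^2$, and projects onto the affine hyperplane $\{x:\langle u,x\rangle=r\}$ rather than~$u^\perp$).
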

\begin{proof}
    Since~$\rvf\le\sf$, (i) is an immediate
    consequence of \cref{thm:regimes},
    and~(ii) will also follow from \cref{thm:regimes} if we prove
    that, for every $\eps\in\oo{0}{1}$,
    \begin{equation}
        \PP\left({\bigl(\la\ka_d\bigr)}^{\frac2{d+1}}\Bigl(1-\rvf(u)\Bigr)>\frac12+\eps\right)\xrightarrow[d\to\infty]{}0.\label{eq:upper-bound-rvf}
    \end{equation}
    Recall the definition of the cap $\cC^{d}(r;u):=\{x\in\dB^d:\langle u,x\rangle>r\}$.
    Let
    $\{X_1,\ldots,X_N\}=\cP^d_\la\cap\cC^d(r;u)$ and denote
    by $X'_i,\,1\le i\le n,$ their projections
    onto the $(d-1)$-dimensional hyperplane $\{x\in\RR^d:\langle u,x\rangle=r\}$,
    with $r\in\oo{0}{1}$ arbitrary.
    The number~$N$ has a Poisson distribution with parameter
    $\ell(r):=\la\lvert\cC^d(r;u)\rvert$, and conditional on~$N$,
    the points~$X'_i-ru$ are i.i.d.\ according to a symmetric distribution
    on~$\RR^{d-1}$. First, Wendel's formula~\cite{Wendel62} allows us to write
    \begin{align*}
        \PP\Bigl(0\notin\hull\{X'_1-ru,\ldots,X'_N-ru\}\:\Big|\:N\Bigr)
        &=\II_{\{N<d\}}
        +\II_{\{N\ge d\}}2^{-(N-1)}\sum_{k=0}^{d-2}\binom{N-1}k\\[.4em]
        &=1-\II_{\{N\ge d\}}2^{-(N-1)}\sum_{k=d}^N\binom{N-1}{k-1}.
    \end{align*}
\begin{figure}[t]
    \centering\includegraphics[width=.5\textwidth]{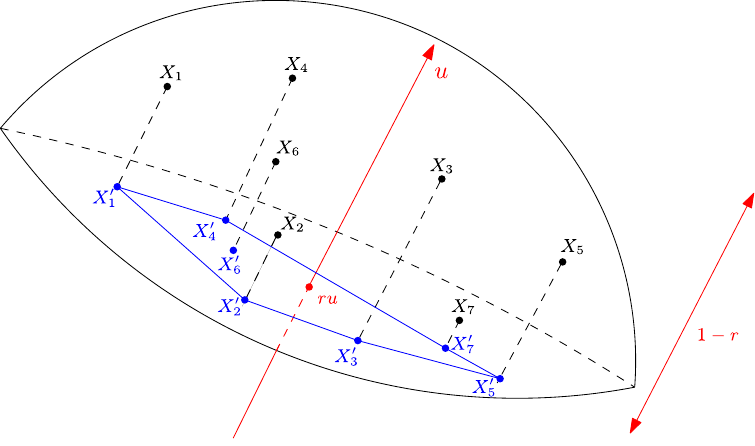}
   \caption{If $ru\in\hull(X'_1,\ldots,X'_N)$, then~$\rvf(u)>r$.}%
    \label{fig:radius-vector}
\end{figure}
    Next, we observe that $ru\notin\hull\{X'_1,\ldots,X'_N\}$
    on the event $\{\rvf(u)\le r\}$ (see \cref{fig:radius-vector}), so we get
    \begin{align}
        \PP\Bigl(\rvf(u)\le r\Bigr)
        &\le
        \EE\left[\PP\Bigl(0\notin\hull\{X'_1-ru,\ldots,X'_N-ru\}\:\Big|
        \:N\Bigr)\right]\notag\\[.4em]
        &=1-\EE\left[\II_{\{N\ge d\}}2^{-(N-1)}\sum_{k=d}^N\binom{N-1}{k-1}\right]\notag\\[.4em]
        &=1-\PP(S_{N-1}\ge d),\label{eq:tail-radius-vector-function}
    \end{align}
    where as in the proof of \cref{lem:origin}, the random variable $S_{N-1}$ conditional on~$N$
    has the Binomial$(N-1,\frac12)$ distribution.
    We now let~$r$ depend on~$d$ and observe that, for
    $r:=1-{(\la\ka_d)}^{-\frac2{d+1}}(\frac12+\eps)$,
    we have
    \[\rvf(u)\le r\iff{(\la\ka_d)}^{\frac2{d+1}}\Bigl(1-\rvf(u)\Bigr)>\frac12+\eps,\]
    so in order to get~\eqref{eq:upper-bound-rvf} it suffices to prove that the upper bound in~\eqref{eq:tail-radius-vector-function} goes to~$0$, where~$N$ is a Poisson r.v.\
    with mean~$\ell(r)$.
    But
    \begin{align*}
        \ell(r)
        &=\frac{\la\ka_{d-1}}2
    \lb\left(1-r^2;\tfrac{d+1}2,\tfrac12\right)\\[.4em]
    &=-{\log{\PP\left(\sf\le r\right)}}\\[.4em]
    &=\exp\left\{\log{\la\ka_d}+\frac{d+1}2\log(1-r^2)
    -\log{r\sqrt{2\pi d}}+o(1)\right\},
    \end{align*}
    by~\eqref{eq:volume-spherical-cap},~\eqref{eq:cdf-support-function}, and~\eqref{eq:exponent-cdf-support-function-2}, provided that~$d\gg r^{-2}$.
    In fact,
    \begin{align*}
        1-r^2&={(\la\ka_d)}^{-\frac2{d+1}}(1+2\eps)\left[1+O\left({(\la\ka_d)}^{-\frac2{d+1}}\right)\right],
    \intertext{so}
        \frac{d+1}2\log(1-r^2)&=-{\log{\la\ka_d}}+\frac{d+1}2\log(1+2\eps)+o(d).
    \end{align*}
    We can then see that
    $\ell(r)={(1+2\eps)}^{\frac d2+o(d)}\gg d$ and  so~$N\gg d$, w.h.p. Hence
    $\PP(S_{N-1}\ge d)\to1$ and this completes the proof.
\end{proof}
\begin{remark}[Asymptotics for the volume ratio]
By integrating the $d$-th power of the radius-vector function~$\rvf$ over the whole sphere~$\dS^{d-1}$,
we obtain the $d$-dimensional volume of $K^d_\la$, namely
\begin{align}
  \lvert{K^d_\la\rvert}&=\int_{\dS^{d-1}}\sigma_{d-1}(\dd u)\int_0^{\rvf(u)} t^{d-1}\,\dd t,\notag
\intertext{which by Fubini's theorem leads to}
  \EE{\lvert{K^d_\la}\lvert}&=\ka_d\int_0^{+\infty}\PP\left(\rvf>t\right) \,dt^{d-1}\,\dd t=\ka_d\EE{{\bigl(\rvf\bigr)}^d}.\label{eq:expr-volume-from-radius-vector-function}
\intertext{In particular, in the case $\log{\la\ka_d}=\frac d2\log{\frac d{2x}}+o(d)$ with $x>0$, which corresponds to the critical regime considered by Bonnet, Kabluchko and Turchi~\cite{Bonnet21},
we can recover the asymptotics for the volume ratio stated in their Theorem~3.1. Indeed,
for any subsequence of~$d\to\infty$
we have, almost surely along a further sub-subsequence,}
{\bigl(\rvf\bigr)}^d&=\exp\left[d\log\left(1-\frac12{(\la\ka_d)}^{-\frac2{d+1}}\bigl(1+o(1)\bigr)\right)\right]\notag\\[.4em]
&=\exp\left[d\log\left(1-\frac xd\,\bigl(1+o(1)\bigr)\right)\right]\notag\\[.4em]
&\sim\ee^{-x},\notag
\end{align}
using \cref{cor:radius-vector-function}, part (ii). This together with the dominated convergence theorem yields
\[\lim_{d\to\infty}\frac{\EE{\lvert{K^d_\la\rvert}}}{\ka_d}=\ee^{-x}.\]
In view of~\eqref{eq:expr-volume-from-radius-vector-function} and \cref{cor:radius-vector-function}, part~(ii), we can conjecture that when $d\ll \log(\la \ka_d)\ll \frac{d}2\log(d)$, 
\begin{equation*}
\log{\frac{\EE{\lvert{K^d_\la\rvert}}}{\ka_d}}\sim -\frac{d}2{(\la \ka_d)}^{-\frac2{d+1}}.   
\end{equation*}
Similarly, when $\log(\la\ka_d)\gg \frac{d}2\log(d)$, we also predict that
\begin{equation*}
1-\frac{\EE{\lvert{K^d_\la\rvert}}}{\ka_d}\sim\frac{d}2(\la\ka_d)^{-\frac2{d+1}}.
\end{equation*}
We thank Matthias Reitzner for his suggestion which led us to this observation.
\end{remark}

\appendix
\section*{Appendix}\label{sec:appendix}
In this appendix, we state and prove \cref{pro:extension-janson}, which is instrumental in deriving \cref{thm:main}.
As exposed in \cref{sec:janson-covering}, Janson's result~\cite[Lemma~8.1]{Janson86}
says that, under~\eqref{eq:moment-janson}
and~\eqref{eq:condition-janson},
the probability $\PP\bigl(\Cover(\La(a),R,a;M)\bigr)$
of covering the manifold~$M$ by a union of geodesic balls,
$\bigcup_iB_M(x_i,a\rho_i)=M$,
where the~$(x_i,\rho_i)$'s arise as the atoms of a Poisson point process with intensity
$\La(a)v_M(\dd x)\otimes\PP(R\in\dd\rho)$,
converges to the Gumbel distribution function as~$a\to0$.
Our aim is to prove the following extension
where the random radius~$R$ is allowed to depend on~$a$.
Instead of adapting Janson's result and rewriting the whole proof,
we rather reduce to it using a coupling argument.
Namely, if~$R_a$ converges to~$R$ in such a way that we may construct~$(R,R_a)$
so that
\begin{equation}
  (1-\eta)R\le R_a\le(1+\eta)R\label{eq:coupling-property}
\end{equation}
holds with high probability for some positive sequence~$\eta:=\eta(a)$
vanishing sufficiently fast, then we may approximate (from above and below)
$\PP\bigl(\Cover(\La,R_a,a;M)\bigr)$
with similar probabilities where~$R_a$ is changed to~$(1\pm\eta)R$,
which after replacing~$a$ by~$a/(1\pm\eta)$
leads to the case handled by Janson.

In \cref{pro:extension-janson} below, condition~\eqref{eq:uniform-radius-upper-bound}
allows us to achieve the upper bound in~\eqref{eq:coupling-property} and
is of course satisfied if the stronger condition
$(\forall\rho>0,\ \PP(R_a>\rho)\le\PP(R>\rho))$ holds for~$a>0$ sufficiently small
(e.g., if the sequence of functions $\rho\mapsto\PP(R_a>\rho)$ is
eventually non-decreasing as~$a\to0$),
while condition~\eqref{eq:W1-radius-convergence} allows us to obtain the lower
bound in~\eqref{eq:coupling-property}.

\setcounter{proposition}{0}
\renewcommand{\theproposition}{\Alph{proposition}}
\begin{proposition}[Extension of Janson's result]\label{pro:extension-janson}
  Let~$M$ be a~$\cC^2$, $D$-dimensional compact Riemannian manifold, and let $\La:=\La(a)>0$ and~$R$
  fulfill Janson's conditions~\eqref{eq:moment-janson} and~\eqref{eq:condition-janson}.
  Suppose $R_a,\,a>0,$ are positive random variables such that
  for every~$w>0$ and all~$a$ sufficiently small,
  \begin{equation}
    \sup_{\rho>0}\:\frac{\PP(R_a>\rho)}{\PP\left(\left[1+\frac w{\log{\frac1a}}\right]R>\rho\right)}\le1.
    \label{eq:uniform-radius-upper-bound}
  \end{equation}
  Suppose further that, as~$a\to0$,
  \begin{equation}
    \Wd\left(\log{R_a},\log R\right)=o\left(\frac1{\log^2{\frac1a}}\right).
    \label{eq:W1-radius-convergence}
  \end{equation}
  Then~\eqref{eq:janson-gumbel}
  also holds with~$R_a$ in place of~$R$:
\[\lim_{a\to0}\PP\bigl(\Cover(\La,R_a,a;M)\bigr)
    =\ee^{-\ee^{-\tau}}\!.\]
\end{proposition}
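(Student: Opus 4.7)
The plan is to reduce to Janson's original statement via a coupling between $R_a$ and $R$. I take the monotone quantile coupling $R := F^{-1}(U)$, $R_a := F_a^{-1}(U)$ on a common uniform $U$: the stochastic domination guaranteed by~\eqref{eq:uniform-radius-upper-bound} then translates into the pointwise bound $R_a \le \bigl(1 + \tfrac{w}{\log(1/a)}\bigr) R$ for every fixed $w > 0$ and all $a$ sufficiently small, while~\eqref{eq:W1-radius-convergence} realizes itself as the minimal $L^1$ identity $\EE|\log R_a - \log R| = o(1/\log^2(1/a))$. I then drive a marked Poisson point process on $M \times \RR_+^2$ with intensity $\Lambda v_M(\dd x) \otimes P_{(R_a, R)}$ whose two marginalizations over $(x, \rho)$ and $(x, \rho')$ give the two covering models on a common probability space.

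For the upper bound, the pointwise inequality gives $B_M(x_i, a R_{a,i}) \subseteq B_M(x_i, a' R_i)$ with $a' := a(1+w/\log(1/a))$, so cover with the $R_a$-balls entails cover with the larger $R$-balls at scale $a'$. Expanding Janson's condition~\eqref{eq:condition-janson} around $a'$, and using that $b a^D v_M(M)\Lambda$ has order $D \log(1/a)$ by that same condition, I find $J(\Lambda, R, a'; M) \to \tau + D^2 w$ as $a \to 0$. Janson's theorem thus gives $\limsup_{a \to 0} \PP(\Cover(\Lambda, R_a, a; M)) \le e^{-e^{-\tau - D^2 w}}$, and letting $w \downarrow 0$ produces the desired upper bound $e^{-e^{-\tau}}$.

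For the lower bound the strategy is symmetric: pick $\eta = \eta(a) \to 0$ with $\eta \log(1/a) \to 0$ so that $J(\Lambda, R, a/(1-\eta); M) \to \tau$ by the same expansion, and define the event $E := \{R_{a,i} \ge (1-\eta) R_i \text{ for every atom } i\}$. On $E$ the reverse ball inclusion yields $\Cover(\Lambda, R, a/(1-\eta); M) \subseteq \Cover(\Lambda, R_a, a; M)$, so
\[\PP(\Cover(\Lambda, R_a, a; M)) \ge \PP(\Cover(\Lambda, R, a/(1-\eta); M)) - \PP(E^c),\]
with the first term already handled by Janson.

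The main obstacle is to show $\PP(E^c) \to 0$. Since the Poisson mean mass $\Lambda v_M(M) \asymp \log(1/a)/a^D$ blows up, a naive union bound combined with Markov's inequality delivers only $\PP(E^c) = O(\log(1/a) \cdot W_1(\log R_a, \log R)/(a^D \eta))$, which diverges for every admissible $\eta$. The resolution must exploit that only the ``decisive'' atoms (those contributing a ball that uniquely covers some otherwise uncovered fringe of $M$) affect the covering in the Janson regime, and that these form a sub-population whose expected count is $\Theta(1)$; restricting the failure event to such atoms and estimating the complementary contribution via Poisson moments allows Markov's inequality to be applied with, e.g., $\eta = 1/\log^{3/2}(1/a)$, yielding $\PP(E^c) \to 0$. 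Assembling the two bounds then gives $\lim_{a \to 0} \PP(\Cover(\Lambda, R_a, a; M)) = e^{-e^{-\tau}}$.
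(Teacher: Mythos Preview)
Your upper bound is correct and matches the paper's argument exactly.

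The lower bound, however, has a genuine gap. You correctly identify that the event
\[E=\bigl\{R_{a,i}\ge(1-\eta)R_i\text{ for every atom }i\bigr\}\]
cannot be controlled by a union bound, since the Poisson process has mean mass of order $\log(1/a)/a^D$. Your proposed rescue via ``decisive atoms'' is not a proof: you do not define what a decisive atom is, you do not show that non-decisive atoms can be discarded without perturbing the covering probability, and the claim that the decisive sub-population has $\Theta(1)$ expected count is asserted rather than established. As written, the lower-bound argument is incomplete.

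The clean resolution avoids the event~$E$ altogether. Rather than demanding that \emph{all} atoms satisfy $R_{a,i}\ge(1-\eta)R_i$, simply \emph{thin} the Poisson process by deleting those atoms that violate it. By the Poisson thinning theorem, the surviving atoms form a Poisson process on $M\times(0,\infty)$ whose $R$-marginal has intensity $\widetilde\La\,v_M(\dd x)\otimes\PP(R\in\dd\rho)$ with
\[\widetilde\La=\La\cdot\PP\bigl(R_a\ge(1-\eta)R\bigr).\]
On the thinned process the inequality $R_{a,i}\ge(1-\eta)R_i$ holds for every retained atom by construction, so
\[\PP\bigl(\Cover(\La,R_a,a;M)\bigr)\ \ge\ \PP\bigl(\Cover(\widetilde\La,R,(1-\eta)a;M)\bigr).\]
Now take $\eta=w/\log(1/a)$ for fixed $w>0$ and use the optimal $\Wd$ coupling together with Markov's inequality:
\[\PP\bigl(R_a<(1-\eta)R\bigr)\le\frac{\EE|\log R_a-\log R|}{\lvert\log(1-\eta)\rvert}
=\frac{\Wd(\log R_a,\log R)}{\eta\,(1+o(1))}=o\!\left(\frac1{\log(1/a)}\right),\]
by~\eqref{eq:W1-radius-convergence}. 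Hence $\widetilde\La=\La\bigl(1+o(1/\log(1/a))\bigr)$; since the linear term $ba^Dv_M(M)\La$ in~$J$ is $\Theta(\log(1/a))$, this perturbation shifts~$J$ only by~$o(1)$, so $J(\widetilde\La,R,a;M)\to\tau$. The further replacement $a\mapsto(1-\eta)a$ shifts~$J$ by $-D^2w+o(1)$, and Janson's theorem then gives
\[\liminf_{a\to0}\PP\bigl(\Cover(\La,R_a,a;M)\bigr)\ge\ee^{-\ee^{-\tau+D^2w}},\]
after which $w\downarrow0$ finishes. The point is that thinning trades the intractable \emph{intersection} event~$E$ for a small multiplicative loss in the intensity, which Janson's condition absorbs.
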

\begin{proof}
  First, with the assumptions~\eqref{eq:moment-janson} and~\eqref{eq:condition-janson}
  of Janson's theorem~\cite[Lemma~8.1]{Janson86} fulfilled for~$R$,
  the convergence~\eqref{eq:janson-gumbel} holds:
  \[\lim_{a\to0}\PP\bigl(\Cover(\La,R,a;M)\bigr)=\ee^{-\ee^{-\tau}}.\]
  To prove the same for $\Cover(\La,R_a,a;M)$, we use a coupling argument.
  Let~$w>0$ and \begin{equation}
    \eta_a:=\frac w{\log{\frac1a}}.\label{eq:looseness_parameter}
  \end{equation}
  By~\eqref{eq:uniform-radius-upper-bound},
  it holds for all~$a$ sufficiently small that
  \[\forall\rho>0,\qquad\PP(R_a>\rho)\le\PP\bigl((1+\eta_a)R>\rho\bigr).\]
  Applying the inverse method,
  that is, considering the generalized inverses~$F_a^{-1}$ and~$F^{-1}$ of the
  distribution functions of~$R_a$ and~$R$ respectively,
  and setting $R_a:=F_a^{-1}(U)$ and~$R:=F^{-1}(U)$ for a uniform variable~$U$ in~$\cc{0}{1}$,
  we may then suppose for every small~$a>0$  that~$R_a$ and~$R$ are coupled so that
  \[R_a\le(1+\eta_a)R,\quad\text{almost surely}.\]
  We then introduce independent, uniformly distributed variables~$X_i,\,i\ge1,$ on~$M$
  and, for every small~$a>0$, an independent Poisson-distributed variable~$N_a$
  with mean $\La(a)v_M(M)$,
  as well as a further independent family $(R_{a,i},R_i)_{i\ge1}$ of i.i.d.\ copies of~$(R_a,R)$.
  Thus, for every small~$a>0$, we have constructed a Poisson point process
  $\bs\Xi_a:=\{X_i,R_{a,i},R_i\}_{1\le i\le N_a}$
  on~$M\times\oo{0}{\infty}^2$ whose projections
  $\Xi_{1,a}:=(X_i,R_{a,i})_{1\le i\le N_a}$ and $\Xi_{2,a}:=(X_i,R_i)_{1\le i\le N_a}$ have
  intensity $\La v_M(\dd x)\otimes\PP(R_a\in\dd\rho)$ and $\La v_M(\dd x)\otimes\PP(R\in\dd\rho)$
  respectively. In particular, with this construction the two covering events are given by
  \[\Cover(\La,R_a,a;M)=\left\{\bigcup_{\Xi_{1,a}}B_M(X_i,aR_{a,i})=M\right\}
  \enspace\text{and}\enspace
    \Cover(\La,R,a;M)=\left\{\bigcup_{\Xi_{2,a}}B_M(X_i,aR_i)=M\right\}.\]
  Since
  $R_{a,i}\le(1+\eta_a)R_i$ almost surely for all~$i\ge1$,
  we get
  \begin{align*}
    \PP\bigl(\Cover(\La,R_a,a;M)\bigr)
    &=\PP\left(\bigcup_{\Xi_{1,a}}B_M(X_i,aR_{a,i})=M\right)\\[.4em]
    &\le\PP\left(\bigcup_{\Xi_{2,a}}B_M(X_i,a(1+\eta_a)R_i)=M\right)\\[.4em]
    &=\PP\bigl(\Cover(\La,R,(1+\eta_a)a;M)\bigr),
  \end{align*}
  and therefore,
  \begin{align*}
  \limsup_{a\to0}\:\PP\bigl(\Cover(\La,R_a,a;M)\bigr)
  &\le\limsup_{a\to0}\:\PP\bigl(\Cover(\La,R,(1+\eta_a)a;M)\bigr)\\[.4em]
  &=\limsup_{a\to0}\:\PP\bigl(\Cover(\La^-,R,a;M)\bigr),
  \end{align*}
  with $
    \La^-(a):=\La(\frac a{1+\eta_a})$ and
  the last two~$\limsup$ being equal because $a \mapsto a/(1+\eta_a)$ can be inverted when~$a>0$ is sufficiently small.
  This~$\limsup$ is in fact a true limit, as we now show.
  Recalling $J(\La,R,a;M)\to\tau$ by~\eqref{eq:condition-janson} and $\eta_a:=w/{\log{\frac1a}}$ in~\eqref{eq:looseness_parameter},
  \begin{align*}
    \La^-(a)
    &=\frac{{(1+\eta_a)}^D}{b(R;M)a^Dv_M(M)}
    \left(\log{\frac{{(1+\eta_a)}^D}{b(R;M)a^D}}
    +D\log{\log{\frac{{(1+\eta_a)}^D}{b(R;M)a^D}}+\log{\alpha(R)}}+\tau+o(1)\right)\\[.4em]
    &=\frac{1+D\eta_a+o(\eta_a)}{b(R;M)a^Dv_M(M)}
    \left(\log{\frac1{b(R;M)a^D}}
    +D\log{\log{\frac1{b(R;M)a^D}}+\log{\alpha(R)}}+\tau+o(1)\right)\\[.4em]
    &=\left[1+\frac{Dw}{\log{\frac1a}}+o\left(\frac1{\log{\frac1a}}\right)\right]\La(a).
  \end{align*}
  This entails $J(\La^-,R,a;M)\to\tau+D^2w$, so Janson's theorem applies with
  \[\lim_{a\to0}\:\PP\bigl(\Cover(\La^-,R,a;M)\bigr)=\ee^{-\ee^{-\tau-D^2w}}\!.\]
  Letting~$w\to0^+$,
  we have proved
  \[\limsup_{a\to0}\:\PP\bigl(\Cover(\La,R_a,a;M)\bigr)\le\ee^{-\ee^{-\tau}}\!.\]
  To establish the other direction, we keep~$w>0$ and $\eta_a:=w/{\log{\frac1a}}$,
  as well as the Poisson point process
  $\bs\Xi_a:=\{X_i,R_{a,i},R_i\}_{1\le i\le N_a}$
  and its projections~$\Xi_{1,a}$ and~$\Xi_{2,a}$,
  except that this time $(R_{a,i},R_i)_{i\ge1}$ are i.i.d.\ copies
  of a different coupling of~$(R_a,R)$.
  Namely, the Kantorovich--Rubinstein theorem~\cite[Theorem~11.8.2]{Dudley02}
  allows us to choose~$(R_a,R)$ so that
  \begin{equation}
    \Wd\left(\log{R_a},\log R\right)=\EE{\left|\log{R_a}-\log R\right|}.%
    \label{eq:kantorovich-rubinstein}
  \end{equation}
  We then restrict~$\bs\Xi_a$
  by keeping only the radii~$R_{a,i}$ such that
  $R_{a,i}\ge(1-\eta_a)R_i$:
  \[\widetilde{\bs\Xi}_a:=\Bigl\{(X_i,R_{a,i},R_i)\in\bs\Xi_a:
  R_{a,i}\ge(1-\eta_a)R_i\Bigr\}.\]
  Hence~$\widetilde\Xi_{2,a}:=\{(X_i,R_i):(X_i,R_{a,i},R_i)\in\widetilde{\bs\Xi}_a\}\subseteq\Xi_{2,a}$
  has a smaller intensity,
  $\widetilde\La v_M(\dd x)\otimes\PP(R_a\in\dd\rho)$
  with $\widetilde\La(a):=\La(a)\PP(R_a\ge(1-\eta_a)R)$.
  Since, by Markov's inequality,~\eqref{eq:kantorovich-rubinstein} and~\eqref{eq:W1-radius-convergence},
  \begin{align*}
  \PP(R_a<(1-\eta_a)R)
  &=\PP\left(\log{\frac R{R_a}}>\log{\frac1{1-\eta_a}}\right)\\[.4em]
  &\le\frac1{\log{\frac1{1-\eta_a}}}\EE{\left|\log{R_a}-\log R\right|}\\[.4em]
  &\le\frac{\log{\frac1a}}w\Wd\left(\log{R_a},\log R\right)\\[.4em]
  &=o\left(\frac1{\log{\frac1a}}\right),
  \end{align*}
  we have $\widetilde\La(a)=\La(a)[1+o(\log^{-1}(\frac1a))]$,
  which by~\eqref{eq:condition-janson} implies that
  $J\bigl(\widetilde\La,R,a;D\bigr)\to\tau$, i.e.,
  \begin{equation}
    \widetilde\La(a)=\frac1{b(R;M)a^Dv_M(M)}\left(\log{\frac1{b(R;M)a^D}}
    +D\log{\log{\frac1{b(R;M)a^D}}+\log{\alpha(R)}}+\tau+o(1)\right).\tag{$\widetilde J_2$}%
    \label{eq:condition-janson-tilde}
  \end{equation}
  Now,
  \begin{align*}
    \PP\bigl(\Cover(\La,R_a,a;M)\bigr)
    &=\PP\left(\bigcup_{\Xi_{1,a}}B_M(X_i,aR_{a,i})=M\right)\\[.4em]
    &\ge\PP\left(\bigcup_{\widetilde{\bs\Xi}_a}B_M(X_i,aR_{a,i})=M\right)\\[.4em]
    &\ge\PP\left(\bigcup_{\widetilde\Xi_{2,a}}B_M(X_i,a(1-\eta_a)R_i)=M\right)\\[.4em]
    &=\PP\bigl(\Cover(\La^+,R_i,(1-\eta_a)a;M)\bigr)
  \end{align*}
  and therefore (in the same way as above),
  \[\liminf_{a\to0}\:\PP\bigl(\Cover(\La,R_a,a;M)\bigr)
  \ge\liminf_{a\to0}\:\PP\bigl(\Cover(\La^+,R,a;M)\bigr),\]
  where, recalling~\eqref{eq:condition-janson-tilde} and~$\eta_a:=w/{\log{\frac1a}}$,
  \begin{align*}
    \La^+(a)
    &:=\widetilde\La\left(\frac a{1-\eta_a}\right)\\[.4em]
    &=\frac{{(1-\eta_a)}^D}{b(R;M)a^Dv_M(M)}
    \left(\log{\frac{{(1-\eta_a)}^D}{b(R;M)a^D}}
    +D\log{\log{\frac{{(1-\eta_a)}^D}{b(R;M)a^D}}+\log{\alpha(R)}}+\tau+o(1)\right)\\[.4em]
    &=\frac{1-D\eta_a+o(\eta_a)}{b(R;M)a^Dv_M(M)}
    \left(\log{\frac1{b(R;M)a^D}}
    +D\log{\log{\frac1{b(R;M)a^D}}+\log{\alpha(R)}}+\tau+o(1)\right)\\[.4em]
    &=\left[1-\frac{Dw}{\log{\frac1a}}+o\left(\frac1{\log{\frac1a}}\right)\right]\La(a).
  \end{align*}
  This entails $J(\La^+,R,a;M)\to\tau-D^2w$, so Janson's theorem applies again with
  \[\lim_{a\to0}\:\PP\bigl(\Cover(\La^+,R,a;M)\bigr)=\ee^{-\ee^{-\tau+D^2w}}\!.\]
  It remains to let~$w\to0^+$ to complete the proof.
\end{proof}

\begingroup
\providecommand{\bysame}{\leavevmode\hbox to3em{\hrulefill}\thinspace}
\renewcommand{\MR}[1]{%
  \href{https://www.ams.org/mathscinet-getitem?mr=#1}{MR\,#1}
}
\newcommand{\arXiv}[1]{%
  \href{https://arxiv.org/abs/#1}{arXiv:#1}
}

\endgroup

\end{document}